\documentclass[reqno]{amsart}
\usepackage[utf8]{inputenc}
\usepackage[margin = 1in]{geometry}
\usepackage{latexsym,amsmath,color,amsthm,epsfig,mathrsfs,mathdots}
\usepackage{mathtools}
\mathtoolsset{showonlyrefs}
\usepackage{graphicx}
\usepackage{amssymb}
\usepackage{mdframed}
\usepackage{fancyhdr}
\usepackage{tikz-cd}
\usepackage{verbatim}
\usepackage{nicefrac}
\usepackage{bbm}
\usepackage{tikz}
\usetikzlibrary{calc,decorations.pathmorphing}
\usepackage{hyperref}
\hypersetup{
    colorlinks = true,
    citecolor=black,
    filecolor=black,
    linkcolor=black,
    urlcolor=blue
}
\usepackage{amsfonts, esint}
\usepackage{color}
\usepackage{dsfont}
\usepackage[T1]{fontenc}
\usepackage[utf8]{inputenc}

\numberwithin{equation}{section}

\usepackage{lastpage}
\newtheorem{thm}{Theorem}[section]
\newtheorem{prop}[thm]{Proposition}
\newtheorem{cor}[thm]{Corollary}

\newtheorem{lem}[thm]{Lemma}

\newtheorem{preremark}[thm]{Remark}

\newenvironment{remark}{\begin{preremark}\rm}{\medskip \end{preremark}}
\numberwithin{equation}{section}

\newcommand{\R}{\mathbb R}
\newcommand{\eps}{\varepsilon}
\newcommand{\vphi}{\varphi}

\newcommand{\ds}{\displaystyle}

\DeclareMathOperator*{\osc}{osc}

\DeclareMathOperator{\tr}{tr}

\definecolor{sh}{RGB}{255,0,100}

\newcounter{case}
\renewcommand{\thecase}{\Alph{case}}
\newcounter{proofcase}[case]
\renewcommand{\theproofcase}{(\thecase\arabic{proofcase})}
\newif\ifusedcase

\newcommand{\proofcase}{%
  \ifusedcase\else\usedcasetrue\stepcounter{case}\fi
  \par
  \refstepcounter{proofcase}
  \everypar=\expandafter{\the\everypar{\setbox0=\lastbox}\everypar{}Case \theproofcase\ }%
}

\begin{document}

\begin{abstract}
We study regularity properties for solutions to the nakedly degenerate elliptic equation $a_{ij}\partial_{ij}u =0$, where the coefficients satisfy $I \ge a_{ij}(x) \ge \lambda(x) I$ and the only assumption is that $\lambda^{-1} \in L^p$. We prove an improvement of oscillation and a Liouville theorem for $p>d-1$, and a Harnack inequality for $p$ sufficiently large depending on dimension. Along the way, we obtain a new $\log-L^\eps$ Weak Harnack inequality for supersolutions. Then, touching subsolutions by double exponential blow-up barriers, we also derive a logarithmic local maximum principle that is new even in the uniformly elliptic case. Both of these results hold for $p>d-1$. Finally, we construct examples showing that there cannot be Harnack or Weak Harnack inequalities in the regime $p<d-1$, nor can there be power-type $L^\eps$ inequalities in the case of any $p<\infty$. 
\end{abstract}

\title{Harnack inequality for non-uniformly elliptic equations in non-divergence form}
\author{David Bowman}
\address[David Bowman]{Department of Mathematics, University of Chicago,  Chicago, Illinois 60637, USA}
\email{dbowman@uchicago.edu}
\maketitle
\section{Introduction}
\label{sec: intro}
In this paper, we set forth the study of Harnack inequalities for  degenerate elliptic equations of the form 
\begin{equation}
    \label{eq: degen}
    a_{ij}\partial_{ij} u =0 \text{ in } B_1 \subset \R^d,
\end{equation}
under the assumption that the coefficients satisfy
\begin{equation} 
\label{eq: ellipticityassumption}
\lambda(x) I \le a_{ij}(x)  \le I \text{ in } B_1
\end{equation}
where the ellipticity $\lambda>0$ is only assumed to satisfy
\begin{equation}
    \frac1\lambda \in L^p(B_1)
\end{equation}
for some $p>d-1$. Of course, the case $p=\infty$ corresponds to the classical uniformly elliptic case. We investigate which results from the classical regularity theory hold under the assumption that the ellipticity is allowed to deterioriate on small sets, controlled only in an $L^p$ sense. As it turns out, some results fail when $\lambda^{-1}$ is allowed to be unbounded. One surprising departure that we will see is that power-type Weak Harnack inequalities for supersolutions \textit{never hold} for any $p<\infty$. 

The equation \eqref{eq: degen} has the following probabilistic interpretation. Given a domain $D$ and a point $x$, consider a particle with dynamics 
$$
\begin{cases}
    dX_t = \sigma(X_t) \ dB_t\\
    X_0 = x
\end{cases}
$$
where $B_t$ refers to a standard Brownian motion, and the matrix field $\sigma$ satisfies $\sigma(y) \sigma^T(y) = 2A(y)$. Then, with $\tau_D$ denoting the exit time for the domain $D$, the solution $u$ to \eqref{eq: degen} has the representation
$$u(x) = \mathbb{E}^x[u(X_{\tau_D})].$$

Thus, a Harnack inequality would assert that, despite the highly anisotropic nature of the diffusion and the presence of pockets of low ellipticity in which particles may become stuck for long durations, the mixing effect is still strong enough to ensure that the exit probabilities starting from different points remain comparable. The difficulty will be in overcoming the ``traps" of low ellipticity that can effectively disconnect the values of solutions between regions. 

We will see that there exists a range of $p\in (p(d), \infty]$ for which a Harnack inequality does indeed hold, as well as that Harnack does \textit{not} hold for $p<d-1$. The range $d-1 \le p \le p(d)$ is left open. We conjecture that a Harnack inequality should be true for $p>d-1$, which we believe should be enough ellipticity to prevent the formation of thin regions with weak diffusion that effectively disconnect the values of solutions.

\subsection{Main Results}
Before stating our main results, we introduce the scale-invariant quantity 
$$\Gamma(B) \coloneqq \left(\frac{1}{|B|}\int_B \lambda^{-p}\right)^{1/p},$$
which, naturally, captures the average degeneracy of the ellipticity on a ball $B$. All of our estimates will depend on $\Gamma(B)$, which can be extremely large for balls of small radius. Also, to generalize our results as much as possible, we state them in terms of the Pucci extremal operators, which we define on the set $\mathbb{S}^d$ of symmetric $d\times d$ matrices as follows. For every $M \in \mathbb{S}^d$, the upper Pucci extremal operator is given by 
$$\mathcal{P}^+_{\lambda(\cdot),1}(M) \coloneqq \tr(M^+) - \lambda(\cdot)\tr(M^-),$$
while the lower extremal operator is given by 
$$\mathcal{P}^-_{\lambda(\cdot),1}(M) \coloneqq \lambda(\cdot) \tr(M^+) - \tr(M^-).$$
It is clear that any solution to a linear equation of the form $a_{ij}\partial_{ij} u=0$ with $\lambda(\cdot) I \le a_{ij}\le I$ satisfies 
$$\mathcal{P}^-_{\lambda(\cdot), 1}(D^2u)\le 0 \le \mathcal{P}^+_{\lambda(\cdot),1}(D^2u).$$
Further properties of the Pucci extremal operators can be found in \cite{CaffarelliCabre1995}. 

Our results hold for viscosity solutions. We briefly discuss this notion of solution later on in this introduction, but the interested reader should again refer to \cite{CaffarelliCabre1995} for a more comprehensive treatment. Because we prove results for the broadest class of viscosity solutions, we are also required to assume that $\lambda(\cdot)$ is continuous and positive in order to rule out pathological counterexamples. We state the two key assumptions on $\lambda(\cdot)$ for posterity. They are stated for general domains $U\subset \R^d$. The assumptions are:
\begin{equation}
    \label{eq: structuralassumption}
    \lambda(\cdot) \text{ is continuous and positive inside } U,
\end{equation}
and
\begin{equation}
    \label{eq: keyassumption}
    \lambda^{-1}(\cdot) \in L^p(U).
\end{equation}
It is important to stress that our results do not depend on any kind of lower bound on the ellipticity $\lambda(\cdot)$, nor on a modulus of continuity. Our estimates depend only on $\|\lambda^{-1}\|_{L^p(B_5)}$, and therefore apply to equations with regions of arbitrarily small ellipticity. The requirement that $\lambda(\cdot)$ be continuous and strictly positive may be dropped if one considers $L^s$-strong solutions. We will discuss different notions of solution in Section \ref{sec: othernotions}. 

Our first main result is the following $\log-L^\eps$ inequality, also known as a \textit{Weak Harnack inequality}. It is considerably weaker than the Weak Harnack inequality available in the uniformly elliptic case, or in the degenerate elliptic case considered in \cite{ImbertSilvestre2016}. 
\begin{prop}
\label{prop: logL^eps}
Let $u: B_5 \to \R$ be a nonnegative viscosity supersolution to 
$$\mathcal{P}^-_{\lambda(\cdot),1}(D^2u) \le 0 \text{ in } B_5.$$

Assume that the ellipticity $\lambda(\cdot)$ satisfies \eqref{eq: structuralassumption} and \eqref{eq: keyassumption} for some $p>d-1$. Then for any $$0 < \eps < \ds \frac{p-(d-1)}{d},$$
there holds
$$\int_{B_1} \left(\log \left(\frac{u}{\inf_{B_1}u}\right)\right)^\eps \le C(p,d, \eps) \Gamma(B_5)^{ \eps \gamma}.$$

The exponent $\gamma$ is given by $\ds \gamma \coloneqq \frac{pd}{p-(d-1)}$.
\end{prop}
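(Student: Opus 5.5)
The plan is to run a Krylov--Safonov type measure-decay argument, but to localize it to cubes on which the ellipticity is quantitatively controlled. Points where $\lambda$ is small are excised using only their measure, and the price of this excision is what turns the usual power-type $L^\eps$ bound into a logarithmic one. By scaling we normalize $\inf_{B_1}u = 1$ and set $K_0 := \Gamma(B_5)$. Everything below is a plan; the step I am least sure of is the first one.

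\textbf{Step 1: a basic measure estimate on a controlled cube.} Take a cube $Q$ of side $r$ inside $B_5$ with $\Gamma(3Q)\le K$, and suppose $\inf_Q u \le 1$. I want to show $|\{u\le M(K)\}\cap Q|\ge \mu|Q|$, with $\mu=\mu(d,p)$ universal and $M(K)$ at most exponential in a power of $K$.

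The ABP estimate is the wrong tool here. At contact points one gets $\lambda\,\Delta\Gamma_u\le f$, so the right-hand side costs $\|f/\lambda\|_{L^d}$, and H\"older only controls this for $p>d$. Radial barriers $|x|^{-\alpha}$ fail as well, since they need $\alpha\gtrsim (d-1)/\lambda$ pointwise.

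What I would try first is a barrier that is averaged over spheres. Fix a center and consider radii $\rho$ together with the spherical means of $\lambda^{-1}$. By Fubini and H\"older on the $(d-1)$-dimensional spheres, the condition $p>d-1$ is exactly what makes $\int_{\partial B_\rho}\lambda^{-1}$ finite for a good set of radii, controlled by $K$. I would then use a comparison or ABP argument in which the tangential directions are paid for on these good spheres only. This should produce a subsolution-type obstruction whose constant grows like $\exp(CK^{\gamma})$, with $\gamma = pd/(p-(d-1))$ coming from H\"older's inequality on the spheres. I expect this to be the main obstacle: it is where the threshold $p>d-1$ must be used sharply, and viscosity supersolutions only allow us to test at touching points, so the construction needs care.

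\textbf{Step 2: removing the bad set.} Call a dyadic cube $Q\subset B_1$ \emph{bad} if $\Gamma(3Q)>K$. Every bad cube lies in $\{\mathcal M(\lambda^{-p}\chi_{B_5})>c K^{p}\}$, where $\mathcal M$ is the Hardy--Littlewood maximal function. The weak $(1,1)$ inequality then bounds the measure of the union of bad cubes by $C(K_0/K)^p$.

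\textbf{Step 3: iteration.} On good cubes, Step 1 feeds the standard Calder\'on--Zygmund cube decomposition. This yields
\[
|\{u>M(K)^k\}\cap B_1| \le (1-\mu)^k + C k\,(K_0/K)^{p},
\]
where the second term accounts for bad cubes picked up at each generation.

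\textbf{Step 4: optimizing $K$.} Given a level $t$, choose $K$ so that $M(K)^k=t$ with $k$ large. Balancing the two terms in Step 3 should give
\[
|\{\log u>s\}\cap B_1|\le C\left(\frac{K_0^{\gamma}}{s}\right)^{q}
\]
for some $q$ with $q>\eps$ whenever $\eps<(p-(d-1))/d$.

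\textbf{Step 5: layer-cake.} Integrating the distribution bound of Step 4 against $\eps s^{\eps-1}\,ds$ gives
\[
\int_{B_1}(\log u)^\eps\le C(p,d,\eps)\,\Gamma(B_5)^{\eps\gamma},
\]
which is the claimed inequality.

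The constraint $\eps<(p-(d-1))/d$ should appear precisely as the requirement $q>\eps$ after the optimization in Step 4, which serves as a consistency check on the exponents in Step 1.
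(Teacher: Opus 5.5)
\textbf{Verdict: there is a genuine gap, at Step 1.} Your reduction in Steps 2--5 is reasonable, and the exponent check $p/\gamma=(p-(d-1))/d$ is right. But Step 1 carries all of the analytic content and is not established.

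\textbf{Why Step 1 is the whole problem.} As you state it, Step 1 asks for a universal fraction $\mu$ at a level with $\log M(K)\lesssim K^{\gamma}$. That is essentially the unit-scale content of the proposition itself: it follows from a rescaled form of the conclusion by Chebyshev. So the plan defers everything to it. The sphere-averaging heuristic does not supply a mechanism, and it misplaces the role of $p>d-1$: one has $\int_{\partial B_\rho}\lambda^{-1}<\infty$ for a.e.\ $\rho$ as soon as $\lambda^{-1}\in L^1$.

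\textbf{Where $p>d-1$ actually enters.} In the paper it enters through a Jacobian bound for the vertex map $m$ of barriers slid under $u$. The supersolution inequality forces $D^2u(y)$ to have a nonpositive eigenvalue, so $Dm$ has an eigenvalue in $[0,1]$. For non-concave barriers $\psi$ this comes from continuity of the least eigenvalue of $D^2u-tD^2\psi$ in $t\in[0,1]$. The other $d-1$ eigenvalues are $\lesssim\lambda^{-1}$. Hence $|\det Dm|\lesssim\lambda^{1-d}\in L^{p/(d-1)}$, and H\"older applies.

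\textbf{What single-scale contact arguments give.} With paraboloids this yields only $|\{u\le2\}\cap B_1|\gtrsim\Gamma(B_1)^{-(d-1)\kappa}|B_1|$, where $\kappa=p/(p-(d-1))$. The Jacobian bound does not depend on the opening, so raising the level buys no larger fraction. A $K$-independent $\mu$ is therefore not what these arguments deliver.

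You also dismissed the cusps $|x|^{-\beta}$ too quickly. They are not used as pointwise subsolutions. Sliding them from below forces contact into $\{\lambda\le(d-1)/(1+\beta)\}$, a set of measure $\lesssim\beta^{-p}K^p$. The same Jacobian bound makes this impossible once $\beta\ge CK^{\kappa}$. The resulting doubling estimate at level $\exp(cK^\kappa)$ is needed anyway. Your Step 1 is a same-cube statement, whereas the Calder\'on--Zygmund step needs the low point to be allowed in the parent cube.

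\textbf{Consequence for Step 3.} With the input that is actually available, $\mu(K)\simeq K^{-(d-1)\kappa}$ and $\log M(K)\simeq K^{\kappa}$, Step 3 as written breaks. You need $k\gtrsim\mu^{-1}\log s$ generations, and the term $Ck(K_0/K)^p$ then only gives $\eps<(p-2(d-1))/d$, i.e.\ it requires $p>2(d-1)$.

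\textbf{How the paper avoids this.} It chooses the bad-ball threshold adaptively, $N_k\simeq\Gamma(B_5)|A_k|^{-1/p}$, for a Besicovitch cover of $A_k=\{u>t_k\}\cap B_1$ by balls reaching $\{u\le t_k\}$. Then bad balls cover at most half of $A_k$. Since $A_{k+1}\subset A_k$, bad balls cost nothing, and the good balls yield the coupled recursion $a_{k+1}\le a_k(1-c\,a_k^{s})$, $\log(t_{k+1}/t_k)=C a_k^{-\sigma}$, with $c,C$ powers of $\Gamma(B_5)$ and $s+\sigma=d/(p-(d-1))$. This gives $\log t_k\lesssim\Gamma(B_5)^{\gamma}a_k^{-(s+\sigma)}$.

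\textbf{How your framework could be repaired.} The bad set $G$ does not depend on $k$, and $A_{k+1}\subset A_k$. A careful Calder\'on--Zygmund count then gives $a_{k+1}\le(1-\mu)a_k+\mu|G|$, hence $a_k\le(1-\mu)^k+|G|$. Only the ratio $\log M(K)/\mu(K)\simeq K^{\kappa+(d-1)\kappa}=K^{\gamma}$ enters Step 4, and your optimization then gives the full range.

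Either way, the missing idea is to replace Step 1 by the two $K$-dependent contact estimates above: the paraboloid measure estimate and the cusp doubling estimate.
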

The proof of this $\log-L^\eps$ estimate must contend with several difficulties that are not present in the uniformly elliptic case. Following Savin in \cite{Savin2005}, we touch a supersolution from below with paraboloids to deduce a measure-to-pointwise lemma that decays with a power of $\Gamma(B)$ (see Lemma \ref{lem: weakharnack1}). We then prove a doubling estimate in Lemma \ref{lem: doubling} by touching from below by singular cusps of the form $|x|^{-\beta}$, as in the paper \cite{ArmstrongSmart2014} by Smart and Armstrong. However, the exponent $\beta$ itself scales like a power of $\Gamma(B)$. This leads to a slightly unusual proof of the Weak Harnack inequality. Indeed, in applying an ink-spots argument, one covers $\{u>1\}$ by balls $B$ and seeks to show that $\{u \le M\} \cap B$ comprises a fixed amount of each ball $B$. In our case, however, we have that for each ball $B$ of the covering, the constant $M$ actually grows exponentially in $\Gamma(B)$, and the proportion of $B$ belonging to $\{u\le M(\Gamma(B))\}$ decays like a power of $\Gamma(B)^{-1}$. Thus balls of bad ellipticity are simultaneously the most expensive in level and the worst in measure gain, and the standard geometric iteration breaks down. 

In Section \ref{sec: optimality} we show directly that a uniform $L^\eps$-style estimate of the form $$\int_{B_1} u^\eps \lesssim (\inf_{B_1} u)^\eps$$
does not hold for any $\eps>0$, outside of the uniformly elliptic case. As a result, $\log$-moments are essentially the best one could hope to bound for supersolutions to $\mathcal{P}^-_{\lambda(\cdot),1}(D^2u)\le 0$, although it is possible that the range of admissible exponents for the $\log$-moments in Proposition \ref{prop: logL^eps} could be improved. Going even further, we show in Proposition \ref{prop: noHarnack} that in the case $p<d-1$, a Weak Harnack inequality of \textit{any} form is impossible. In light of the above proposition, $p=d-1$ is the critical threshold for Weak Harnack inequalities. 

As a consequence of the $\log-L^\eps$ estimate, we derive a unit scale improvement of oscillation result for solutions.
\begin{prop}
\label{prop: IOL}
    Let $u:B_5 \to \R$ satisfy in the viscosity sense the inequalities
    $$\mathcal{P}^-_{\lambda(\cdot),1}(D^2u)\le 0 \le \mathcal{P}^+_{\lambda(\cdot),1}(D^2u) \text{ on } B_5,$$
    where the ellipticity $\lambda(\cdot)$ satisfies \eqref{eq: structuralassumption} and \eqref{eq: keyassumption} for some $p>d-1$. Then there holds
    $$\osc_{B_1} u \le (1-\theta) \osc_{B_5}u,$$
    where $\theta \simeq \exp(-c(d,p) \Gamma(B_5)^{\gamma}) \in (0,1)$, and $\gamma$ is as in Proposition \ref{prop: logL^eps}.
\end{prop}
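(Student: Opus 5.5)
We derive Proposition \ref{prop: IOL} from the $\log-L^\eps$ estimate of Proposition \ref{prop: logL^eps} through the usual two-sided argument. Both the hypotheses and the conclusion are invariant under adding constants and under multiplying $u$ by a positive constant, since the Pucci operators are positively homogeneous. We may therefore normalize $\inf_{B_5} u = 0$ and $\sup_{B_5} u = 1$, so that $\osc_{B_5} u = 1$ (if the oscillation is $0$ there is nothing to prove). Fix once and for all an admissible exponent, say $\eps = \frac{p-(d-1)}{2d}$, so that the constant $C(p,d,\eps)$ depends only on $d$ and $p$.

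Both $v_1 = u$ and $v_2 = 1-u$ are nonnegative on $B_5$. Since $\mathcal{P}^-(-M) = -\mathcal{P}^+(M)$, the inequality $\mathcal{P}^+_{\lambda(\cdot),1}(D^2u)\ge 0$ becomes $\mathcal{P}^-_{\lambda(\cdot),1}(D^2 v_2)\le 0$, so both functions are supersolutions in the sense of Proposition \ref{prop: logL^eps}. Moreover, at every point of $B_1$ we have $v_1+v_2 = 1$, so at least one of the sets $\{v_1\ge 1/2\}\cap B_1$ and $\{v_2 \ge 1/2\}\cap B_1$ has measure at least $|B_1|/2$. Call the corresponding function $v$. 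To handle the possibility that $\inf_{B_1} v = 0$, we apply Proposition \ref{prop: logL^eps} to $v+\delta$ and let $\delta\to 0$, or equivalently we argue by contradiction assuming $\inf_{B_1} v$ is tiny.

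Set $m = \inf_{B_1} v$. On the set $\{v\ge 1/2\}\cap B_1$ we have $\log(v/m)\ge \log(1/(2m))$, so Proposition \ref{prop: logL^eps} gives
\[
\tfrac{|B_1|}{2}\,\bigl(\log\tfrac{1}{2m}\bigr)^{\eps} \le C(p,d)\,\Gamma(B_5)^{\eps\gamma},
\]
which yields $\log\frac{1}{2m}\le C\,\Gamma(B_5)^{\gamma}$, that is, $m\ge \tfrac12\exp(-C\Gamma(B_5)^\gamma) =: \theta$. Consequently, either $\inf_{B_1} u\ge\theta$ or $\sup_{B_1}u\le 1-\theta$, and in both cases $\osc_{B_1}u\le 1-\theta$. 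The exponent $\gamma$ survives unchanged because the power $\eps$ cancels when the $\eps$-th root is taken.

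The only delicate point is the positivity issue. Proposition \ref{prop: logL^eps} is only meaningful when $\inf_{B_1} v>0$. For $v+\delta$, which is still a supersolution, the inequality above holds with $m$ replaced by $m+\delta$ and with $\log\frac{1}{2(m+\delta)}$ in place of $\log\frac1{2m}$, provided $m+\delta<1/2$. Letting $\delta\to0$ gives the bound uniformly. Beyond this, the argument is a routine consequence of the earlier proposition; all the real difficulty has already been absorbed into Proposition \ref{prop: logL^eps}.
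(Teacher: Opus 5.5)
Your proposal is correct and follows essentially the same route as the paper: normalize so that $0\le v\le 1$ on $B_5$, pick whichever of $v$ or $1-v$ is at least $1/2$ on half of $B_1$, and feed this into Proposition \ref{prop: logL^eps} to get $\inf_{B_1} v\ge \frac12\exp(-C\Gamma(B_5)^{\gamma})$. The only difference is cosmetic: the paper secures $\inf_{B_1} v>0$ by appealing to the maximum principle and applies the estimate to $v/m$, whereas your $v+\delta$ regularization avoids that appeal and is slightly more self-contained.
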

This should be contrasted with the result of \cite{ArmstrongSmart2014}, wherein it was shown that an improvement of oscillation holds for the equation
$$a_{ij}\partial_{ij} u = f \in L^\infty(B_1)$$
under the assumption that $\lambda^{-1} \in L^p(B_1)$ for $p\ge d$. We improve this result in the homogeneous case, relaxing the condition to $p>d-1$. We believe this threshold should be essentially optimal, in light of the fact that there is no Weak Harnack estimate in the case $p<d-1$. 
As an immediate consequence of Proposition \ref{prop: IOL}, we arrive at the familiar Liouville's theorem.
\begin{thm}
    \label{thm: Liouville}
    Suppose that $u: \R^d \to \R$ is bounded and satisfies in the viscosity sense the inequalities
    $$\mathcal{P}^-_{\lambda(\cdot),1}(D^2u) \le 0 \le \mathcal{P}^+_{\lambda(\cdot),1}(D^2u) \text{ in } \R^d.$$
    Assume the ellipticity $\lambda(\cdot)$ satisfies \eqref{eq: structuralassumption} as well as that for some $p>d-1$, 
    $$\limsup_{R \to \infty} \frac{1}{|B_R|} \int_{B_R} \lambda^{-p} <\infty.$$
     Then the solution $u$ must be constant. 
\end{thm}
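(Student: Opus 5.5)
The plan is to apply Proposition \ref{prop: IOL} at every large scale, using the fact that both the Pucci inequalities and the quantity $\Gamma$ are invariant under rescaling. Fix $R>0$ and set $u_R(x) \coloneqq u(Rx)$ and $\lambda_R(x) \coloneqq \lambda(Rx)$. The Hessian transforms as $D^2u_R(x) = R^2 D^2u(Rx)$, and both $\mathcal{P}^\pm_{\lambda(\cdot),1}$ are positively homogeneous of degree one in $M$. Hence $u_R$ satisfies
$$\mathcal{P}^-_{\lambda_R(\cdot),1}(D^2u_R)\le 0\le \mathcal{P}^+_{\lambda_R(\cdot),1}(D^2u_R) \text{ in } \R^d$$
in the viscosity sense; one checks this by rescaling the test functions. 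The rescaled ellipticity $\lambda_R$ is still continuous and positive. A change of variables gives
$$\Gamma_{\lambda_R}(B_5) = \left(\frac{1}{|B_{5R}|}\int_{B_{5R}} \lambda^{-p}\right)^{1/p}.$$

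By hypothesis there are $K<\infty$ and $R_0$ such that $\frac{1}{|B_\rho|}\int_{B_\rho}\lambda^{-p}\le K$ for all $\rho\ge R_0$. Since $\lambda$ is continuous and positive, $\lambda^{-p}$ is locally bounded, so $\lambda_R^{-1}\in L^p(B_5)$ and \eqref{eq: keyassumption} holds for every $R$. Therefore, for $5R\ge R_0$, we have $\Gamma_{\lambda_R}(B_5)\le K^{1/p}$.

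Proposition \ref{prop: IOL} then yields a uniform $\theta_0 \simeq \exp(-c(d,p)K^{\gamma/p})\in(0,1)$ such that $\osc_{B_R}u \le (1-\theta_0)\osc_{B_{5R}}u$ for all $R\ge R_0/5$. This uses the fact, implicit in the statement of the proposition, that $\theta$ is monotone in $\Gamma$, so an upper bound on $\Gamma$ gives a lower bound on $\theta$. If monotonicity is not explicit, I would simply apply the proposition with $\Gamma$ replaced by its upper bound, since the proof only uses an upper bound on $\Gamma$.

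To conclude, fix any $r\ge R_0/5$ and iterate $k$ times:
$$\osc_{B_r}u \le (1-\theta_0)^k \osc_{B_{5^k r}}u \le (1-\theta_0)^k\, 2\|u\|_{L^\infty(\R^d)}.$$
Letting $k\to\infty$ gives $\osc_{B_r}u=0$ for every large $r$, so $u$ is constant.

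The only point needing care is the uniformity of $\theta$, i.e.\ that the bound on $\Gamma$ at large scales is scale-invariant. This is exactly what the $\limsup$ hypothesis provides. The scaling invariance of viscosity super- and subsolutions for the Pucci operators with variable $\lambda$ is routine.
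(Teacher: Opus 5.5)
Your proposal is correct and is essentially the paper's argument. You rescale $u_R(x)=u(Rx)$, use the $\limsup$ hypothesis to bound $\Gamma(B_{5R})$ uniformly for large $R$, and get a uniform $\theta$ from Proposition~\ref{prop: IOL}. The only cosmetic difference is how you conclude. The paper normalizes $\inf_{\R^d}u=0$ and lets $R\to\infty$ in a single application, since both oscillations tend to $\sup_{\R^d}u$. You instead iterate over the scales $5^k r$, which has the small bonus of a quantitative large-scale decay of oscillation.
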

Notice that the assumption on the ellipticity in the above theorem is the natural replacement of the assumption that $\lambda^{-1}\in L^p(\R^d)$, which does not make sense under the assumption that $\lambda\le 1$.
\\\par Our next main result is a logarithmic local maximum principle for subsolutions. It is obtained by touching a subsolution from above by translations of double exponential blow-up barriers of the form $U(x) \coloneqq c\exp\exp((1-|x|^2)^{-1}).$ To our knowledge, this is the first time an estimate has been obtained by touching with this kind of barrier. In the uniformly elliptic case, it allows one to bound the maximum of a subsolution in terms of its log-moments, for any moment of order larger than $d-1$. The proof makes no use of any classical Alexandroff-Bakelmann-Pucci maximum principle.
\begin{prop}
\label{prop: LMP2}
Let $u:B_3 \to \R$ be a nonnegative viscosity subsolution to 
$$\mathcal{P}^+_{\lambda(\cdot),1}(D^2u)  \ge 0  \text{ in } B_3.$$
Assume that the ellipticity $\lambda(\cdot)$ satisfies \eqref{eq: structuralassumption} and \eqref{eq: keyassumption} for some $p>d-1$. Fix an exponent $q>d-1$, and define the exponent
$$\kappa \coloneqq \frac{p}{p-(d-1)}.$$
Then there holds the following homogeneous form of a local maximum principle:
$$\Gamma(B_3)^{(1-d)\kappa} \lesssim  \int_{\{u>M/2\} \cap B_3} \left(\log \left(\frac{2u}{\sup_{B_1}u}\right)\right)^{q\kappa}.$$

Supposing further that $\sup_{B_1} u>2$ and fixing an exponent $\tau>\kappa q$, where $\kappa$ is as in Proposition \ref{prop: logL^eps}, one may estimate
$$\sup_{B_1} u \le 2 \exp\left(c\Gamma(B_3)^{\frac{\kappa(d-1)}{\tau - q\kappa}} \left(\int_{\{u>1\} \cap B_3} (\log u)^{\tau}\right)^{\frac{1}{\tau - q\kappa}}\right)$$
for some constant $c$ depending on $q,p,d,$ and $\tau$. 
\end{prop}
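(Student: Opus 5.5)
Write $M \coloneqq \sup_{B_1} u$. I will touch $u$ from above by translated double-exponential barriers $U_{y,r}(x) = c\exp\exp\big((1-|x-y|^2/r^2)^{-1}\big)$ and make a measure count of the contact points.

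\textbf{Step 1: the barrier.} First I will check that $U$ is a strict supersolution of the Pucci problem at every point where it is touched. Set $s = 1-|x|^2$ and $\phi = \exp(s^{-1})$. The Hessian of $e^\phi$ is
\[
e^\phi\big(\phi''(\cdot)\,\nabla s\otimes\nabla s + \phi'\,D^2 s\big) + e^\phi (\phi')^2\, \nabla s\otimes \nabla s .
\]
This matrix has one large positive radial eigenvalue, of size roughly $e^\phi\phi^2 s^{-4}|x|^2$. The tangential eigenvalues are negative, of size $e^\phi \phi s^{-2}$. Hence $\mathcal{P}^+_{\lambda,1}(D^2U)<0$ holds exactly when
\[
\lambda(x)\,(d-1)\,e^\phi\phi s^{-2} \gtrsim e^\phi\phi^2 s^{-4}|x|^2, \quad\text{i.e.}\quad \lambda \gtrsim \frac{\phi}{s^{2}}.
\]
I must then check whether the inequality in fact runs the other way. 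If it does, then at a contact point the positive eigenvalue dominates unless the radial derivative is small. I will therefore restrict to contacts where the tangential terms dominate, which requires $\lambda(x)\gtrsim \phi s^{-2}$ in the relevant sense. Either way, contact is forbidden at points where $\lambda(x)^{-1}$ is small compared with $(\log$ of the barrier height$)$ raised to a power. Writing $L = \log U \approx \phi$, the forbidden set is where $\lambda^{-1}\lesssim L(\log L)^{2}$, and powers of $\log$ will be absorbed into the choice $q>d-1$.

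\textbf{Step 2: sliding.} For each centre $y$ in a set of directions and each radius, I lower the barrier from infinity until it touches $u$. Because $U$ blows up on $\partial B_r(y)$ and $u\le \sup_{B_3}u<\infty$, contact occurs in the interior. Moreover the contact value is $\ge M/2$ if the barrier is normalized so that $U(y)\approx M/2$ while $u$ reaches $M$ near $y$. So every contact point lies in $\{u>M/2\}$ and has $\log(2u/M)\gtrsim$ the barrier's local $\log$-$\log$ level. I then take a family of barriers whose vertex follows a point $x_0\in \overline B_1$ with $u(x_0)=M$, translated over a $(d-1)$-parameter family of directions (an affine sliding). A Jacobian/area-formula argument, as in Savin's paraboloid argument but one dimension lower, should show that the contact map covers a set of parameters of measure $\gtrsim 1$. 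Its Jacobian at a contact point $x$ is bounded by $(\log(2u(x)/M))^{q}\lambda(x)^{-(d-1)}$ up to constants. Here the $\lambda^{-(d-1)}$ comes from the tangential eigenvalues, which are controlled only through $\lambda$ because of the supersolution inequality at contact. This gives
\[
1\lesssim \int_{\{u>M/2\}\cap B_3} \Big(\log \tfrac{2u}{M}\Big)^{q}\lambda^{-(d-1)} .
\]
Hölder with exponents $\kappa = p/(p-(d-1))$ and $p/(d-1)$ then gives
\[
1\lesssim \Gamma(B_3)^{d-1}\Big(\int(\log)^{q\kappa}\Big)^{1/\kappa},
\]
which is the first inequality after raising to the power $\kappa$.

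\textbf{Step 3: the second inequality.} On $\{u>M/2\}$, with $M>2$ and $u>1$, I have $\log(2u/M)\le \log u$. Write $A=\int_{\{u>1\}\cap B_3}(\log u)^\tau$. By Hölder, or more simply by using $\log u\ge \log(M/2)$ on this set,
\[
\int_{\{u>M/2\}} \Big(\log\tfrac{2u}{M}\Big)^{q\kappa} \le \int_{\{u>M/2\}}(\log u)^{\tau}\,(\log(M/2))^{q\kappa-\tau} \le A\,(\log (M/2))^{q\kappa-\tau}.
\]
Combining this with Step 2 gives
\[
(\log(M/2))^{\tau-q\kappa}\lesssim \Gamma^{(d-1)\kappa}A,
\]
which is exactly the stated bound.

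\textbf{Main obstacle.} The hard step is Step 2. It needs an honest computation of which eigenvalue sign dominates for the double-exponential barrier, and a correct parametrization of the touching family so that the contact map has a $(d-1)$-dimensional Jacobian controlled by $\lambda^{-(d-1)}$ times a $\log$-power. Only the $(d-1)$-dimensional Jacobian makes the threshold $q>d-1$ appear, rather than the $d$ of the classical ABP estimate. I would first try parametrizing by the centre $y$ together with the contact level, and use the super-exponential growth to convert the height into the needed $\log$-moment.
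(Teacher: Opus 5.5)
Your Step 3 is correct, and slightly simpler than the paper's layer-cake/Chebyshev computation. On $\{u>M/2\}$ with $M>2$ you have $0<\log(2u/M)<\log u$ and $(\log u)^{q\kappa-\tau}\le(\log(M/2))^{q\kappa-\tau}$, which gives the same bound with constant $1$. The H\"older step is also fine.

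The genuine gap is the core inequality
\[
1\lesssim\int_{\{u>M/2\}\cap B_3}\Big(\log\tfrac{2u}{M}\Big)^{q}\lambda^{1-d},
\]
which you assert but do not prove, and the mechanism you sketch for it does not exist. In Step 1 the tangential eigenvalues have the wrong sign. With $s=1-|x|^2$ and $\phi=\exp(s^{-1})$, the tangential part of $D^2e^{\phi}$ is $e^{\phi}\phi'(s)D^2s=e^{\phi}(-s^{-2}\phi)(-2I)$, so those eigenvalues equal $2e^{\phi}\phi s^{-2}>0$. Hence $U$ is convex and $\mathcal{P}^+_{\lambda(\cdot),1}(D^2U)=\tr(D^2U)>0$ everywhere. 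The equation therefore places no restriction at all on where $U$ may touch a subsolution from above. The ``forbidden contact set'' device belongs to the doubling lemma, where a non-convex cusp touches a supersolution from below and forces contact into $\{\lambda\le\mu\}$; it does not transfer here. Likewise, the exponent $d-1$ does not come from a $(d-1)$-parameter family or from working ``one dimension lower''.

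The missing idea is to get everything from the Jacobian of a full $d$-dimensional contact map.

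\begin{enumerate}
\item Rescale to $v=2u(x_0+\cdot)/M$ with $v(0)=2$. Slide translates $U(\cdot-x)$ for all centres $x\in B_{1/4}$, normalized so that $U\le 2$ on $B_{1/4}$ and $U>1$ everywhere. Then every contact point $y$ satisfies $v(y)\ge U(y-x)>1$.
\item The contact map $m\colon y\mapsto x$ has $Dm=I-(D^2U)^{-1}D^2v$.
\item The subsolution inequality forces $D^2v$ to have a nonnegative eigenvalue, and $D^2v\le D^2U$. So the top eigenvalue of $D^2v-tD^2U$ changes sign on $[0,1]$, and $Dm$ has an eigenvalue in $[0,1]$.
\item The subsolution inequality also gives $\tr((D^2v)^-)\le\lambda^{-1}\tr(D^2U)$. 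Hence $|Dm|\lesssim\lambda^{-1}\sigma_{\max}(D^2U)/\sigma_{\min}(D^2U)$.
\end{enumerate}

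So the factor $\lambda^{1-d}$ comes from the remaining $d-1$ singular values of a $d\times d$ Jacobian. The log-power comes from the anisotropy ratio of $D^2U$. For the double exponential this ratio behaves like $\phi s^{-2}\sim\log U\,(\log\log U)^2$, so it is $\lesssim_\delta(\log v(y))^{1+\delta}$. You computed exactly this ratio in Step 1 but used it as a contact threshold rather than as a Jacobian factor. Its slow growth is the whole reason for choosing this barrier.

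This gives $|\det Dm|\lesssim\lambda^{1-d}(\log v)^{(d-1)(1+\delta)}$, i.e.\ $q=(d-1)(1+\delta)$. One technical point remains. Contacts may occur arbitrarily close to the blow-up sphere, so $m$ is only Lipschitz on the truncations $\mathcal{T}\cap\{v\le N\}$. Apply the area formula there and let $N\to\infty$ by monotone convergence.
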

This estimate ensures that if a subsolution is not too small at a point, then even its logarithm must concentrate in some integral sense, which is a strengthening of the classical $L^p$-type local maximum principle for uniformly elliptic equations (see, for instance, Theorem 4.8 of \cite{CaffarelliCabre1995}). We believe this estimate is essentially sharp. At least, and as we discuss after the proof of Lemma \ref{lem: LMP1}, the estimate cannot be substantively improved by touching from above with different shapes. This is due to a hard restriction encoded into the geometry of convex barriers which are blowing up towards the boundary.

Finally, we arrive at our most important result, which is a Harnack inequality for $p$ sufficiently large, depending on dimension. 

\begin{thm}
\label{thm: Harnack}
Let $u:B_5 \to \R$ be nonnegative and satisfy in the viscosity sense the inequalities
    $$\mathcal{P}^-_{\lambda(\cdot),1}(D^2u)\le 0 \le \mathcal{P}^+_{\lambda(\cdot),1}(D^2u) \text{ on } B_5,$$ Assume that the ellipticity $\lambda(\cdot)$ satisfies \eqref{eq: structuralassumption} and \eqref{eq: keyassumption} for some $p>p(d)$, where 
$$p(d) \coloneqq \ds  \frac{d-1}{2}\left(d+2 + \sqrt{d(d+4)}\right).$$
Then there exists a constant $C = C(p,d,\Gamma(B_5)))$ such that 
$$\sup_{B_1}u \le  C \inf_{B_1}u.$$
The constant $C$ may be taken as  
$$C = \exp(c(d,p)\Gamma(B_5)^\nu)$$
for any 
$$\nu > \frac{dp^2}{(p-(d-1))^2- dp(d-1)}.$$
\end{thm}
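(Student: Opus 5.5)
Normalize $\inf_{B_1}u = 1$; by Proposition \ref{prop: LMP2} it suffices to bound a log-moment of $u$ on $B_3$. The plan is to combine the $\log$–$L^\eps$ Weak Harnack inequality (Proposition \ref{prop: logL^eps}) with the logarithmic local maximum principle (Proposition \ref{prop: LMP2}), after a covering/chaining step. The Harnack exponent $p(d)$ should come out of requiring the two exponent ranges to overlap.

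\textbf{Step 1: supersolution side.} Proposition \ref{prop: logL^eps} controls
$$\int_{B_1}\bigl(\log u\bigr)^{\eps}\lesssim \Gamma(B_5)^{\eps\gamma}$$
for every $\eps<(p-(d-1))/d$. Rescaling and applying it on balls of radius comparable to $1$ centered at points of $B_3$ (after enlarging the original domain by scaling, so that $B_3$ sits well inside), and chaining along a bounded number of overlapping balls, we should get
$$\int_{B_3}(\log u)^{\eps}\lesssim \Gamma^{\eps\gamma},$$
with $\Gamma$ evaluated on a fixed larger ball. The chaining needs the infimum on consecutive balls to be comparable. To get this, I would use the measure estimate from the $\log$–$L^\eps$ bound: a fixed fraction of each ball has $\log u\lesssim \Gamma^{\gamma}$. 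Combined with the doubling Lemma \ref{lem: doubling}, this propagates the infimum at cost $\exp(C\Gamma^{\gamma})$.

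\textbf{Step 2: subsolution side.} Apply the second estimate of Proposition \ref{prop: LMP2} with $\tau=\eps$. This requires
$$\eps>\kappa q>\kappa(d-1)=\frac{p(d-1)}{p-(d-1)}.$$
Compatibility with $\eps<\frac{p-(d-1)}{d}$ then forces
$$(p-(d-1))^2> dp(d-1).$$
Setting $s=p$, this reads $p^2-(d-1)(d+2)p+(d-1)^2>0$. Its larger root is $\tfrac{d-1}{2}\bigl(d+2+\sqrt{d(d+4)}\bigr)=p(d)$, which matches the theorem.

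\textbf{Step 3: combine.} Inserting Step 1 gives
$$\log\sup_{B_1}u\lesssim \Gamma^{\frac{\kappa(d-1)}{\eps-q\kappa}}\,\Gamma^{\frac{\eps\gamma}{\eps-q\kappa}}.$$
Letting $\eps\uparrow\frac{p-(d-1)}{d}$ and $q\downarrow d-1$, the exponent tends to
$$\frac{\kappa(d-1)+\eps\gamma}{\eps-\kappa(d-1)}.$$
With $\eps\gamma=p$, $\kappa=p/(p-(d-1))$ and $\eps-\kappa(d-1)=\frac{(p-(d-1))^2-dp(d-1)}{d(p-(d-1))}$, this equals $\frac{dp^2}{(p-(d-1))^2-dp(d-1)}$. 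That matches the claimed threshold for $\nu$, and is consistent with a strict inequality since the limits are not attained.

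\textbf{Main obstacle.} The hard part is Step 1's passage from $B_1$ to $B_3$: comparing $\inf$ on different balls without losing more than $\exp(C\Gamma^{\gamma})$, while applying Proposition \ref{prop: LMP2} on $\{u>1\}\cap B_3$. I would try to avoid chaining entirely by applying both propositions at suitable scales inside $B_5$ (Proposition \ref{prop: logL^eps} with $B_5$ rescaled to contain $B_3$ in its unit ball). Any extra $\Gamma^{\gamma}$ factor from rescaling is absorbed since $\nu>\gamma$.
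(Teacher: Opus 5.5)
Your proposal is correct and follows the paper's proof. You feed Proposition~\ref{prop: logL^eps} into the second estimate of Proposition~\ref{prop: LMP2} with $\tau=\eps$, and the requirement $\eps>q\kappa$ against $\eps<\frac{p-(d-1)}{d}$ gives $p>p(d)$. Your exponent $\frac{\kappa(d-1)+\eps\gamma}{\eps-q\kappa}$ is the right one: it reproduces the stated threshold for $\nu$, whereas the paper's intermediate display $\frac{\kappa(d-1)}{\eps-q\kappa}+\kappa\eps d$ is a slip. The only real difference is Step 1. The paper normalizes $\inf_{B_3}u=1$, so that $\sup_{B_1}u\le C\inf_{B_3}u\le C\inf_{B_1}u$, and simply asserts the $\log$--$L^\eps$ bound on $B_3$; taken literally this needs $u$ on $B_{15}$, exactly like your rescaling fallback. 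Your covering of $B_3$ by small balls $B'$, with infima compared via Lemma~\ref{lem: doubling} or Chebyshev, is a valid way to stay inside $B_5$, since it gives $\log\inf_{B'}u\lesssim\Gamma(B_5)^{\gamma}$ and so adds only $O(\Gamma(B_5)^{\eps\gamma})$ to the log-moment.
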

We strongly suspect that this result is not optimal. In light of the improvement of oscillation (Proposition \ref{prop: IOL}), we believe that the correct threshold for Harnack should be $p>d-1$. Our Weak Harnack inequality (Proposition \ref{prop: logL^eps}) and local maximum principle (Proposition \ref{prop: LMP2}) both hold for $p>d-1$, but seem to only be compatible for $p$ sufficiently large. To further support our conjecture, we show in Section \ref{sec: optimality} that a Harnack inequality \textit{cannot} hold in the regime $p<d-1$, at least in dimension $3$ and higher. We also remark that, in contrast with the uniformly elliptic case, our improvement of oscillation and Harnack inequality results do not imply any kind of H{\"o}lder modulus of continuity at small scales. They do, however, provide large scale H{\"o}lder estimates, which could be useful for future work on the homogenization of the equation \eqref{eq: degen}. As we will discuss at the end of the paper, it is possible to remove the assumption that $\lambda(\cdot)$ be continuous and strictly positive if one considers a stronger notion of solution. The following table summarizes most of our new results.

\begin{table}[b]
\centering
\renewcommand{\arraystretch}{1.3}
\begin{tabular}{c|c|c|c}
 & $p<d-1$ & $d-1 < p \le p(d)$ & $p>p(d)$ \\
\hline
Harnack & fails & conjectured to hold & holds \\
\hline
Weak Harnack & fails & holds & holds
\end{tabular}
\caption{Status of Harnack and Weak Harnack estimates in the different integrability regimes.}
\label{tab:harnack-regimes}
\end{table}

\subsection{Historical Results}

The study of non-uniformly elliptic equations in divergence form has a long history. 
In the normalized setting $I \ge A \ge \lambda I$, Trudinger proved in \cite{Trudinger1971} 
that nonnegative weak solutions of
\[
\partial_i(a_{ij}\partial_j u)=0
\]
satisfy a Harnack inequality under the assumption $\lambda^{-1}\in L^p(B_1)$ for 
$p>\frac d2$. This was recently improved by Bella and Sch\"affner in \cite{BellaSchaeffner2021}, 
who obtained a Harnack inequality under the essentially optimal condition 
$p>\frac{d-1}{2}$. Their argument is based on a refinement of the classical Moser iteration technique. A construction by Franchi, Serapioni, and Serra Cassano in \cite{FranchiSerapioniCassano1998} shows that for $p<\frac{d-1}{2}$ Harnack inequality may fail, thus identifying $\frac{d-1}{2}$ as the critical exponent in divergence form. We also point out the related papers \cite{AlbrittonDong2023} and \cite{IgnatovaKukavicaRyzhik2014}, which obtain Harnack inequalities for uniformly elliptic equations with supercritical, divergence-free drifts via refined Moser iterations. Also relevant is the work \cite{FabesKenigSerapioni1982} by Fabes, Kenig, and Serapioni, who were able to derive an actual local H{\"o}lder regularity result for solutions to  divergence form equations with coefficients satisfying an $A_2$ Muckenhoupt weight condition. We refer the interested reader to \cite{BellaSchaeffner2021} for a more detailed discussion of the history of divergence form equations with degenerate ellipticity.

In contrast, the nondivergence case appears to be much less developed. Almost the entirety of the existing literature concerns non-uniformly elliptic equations possessing some nonlinear structural conditions, such as those which are of Monge-Ampere or $p$-Laplacian type. See for example \cite{CaffarelliGutierrez1997}, \cite{ImbertSilvestre2016}, and the references therein. The setting is very different here. Armstrong and Smart were the first to analyze plainly degenerate equations of the form \eqref{eq: degen} in the paper \cite{ArmstrongSmart2014}. In their very interesting paper, they establish unit-scale oscillation decay and stochastic homogenization results for fully nonlinear equations with bounded right-hand side, under the assumption that the ellipticity $\lambda$ satisfies \eqref{eq: keyassumption} for $p\ge d$. Their results do not, however, yield a Harnack inequality. 

More recently, Kim and Lee claimed in \cite{KimLee2025} a Harnack inequality for solutions to \eqref{eq: degen} under the assumption $\lambda^{-1}\in L^p(B_1)$ for some $p>2d$. Their proof unfortunately appears to be invalid. In particular, Theorem $1.3$ of their paper is an $L^\eps$ estimate for supersolutions, and we will show in our Proposition \ref{prop: noLeps} that actually no such estimate can hold for any $p<\infty$.

\subsection{Preliminaries}
In this subsection we give some brief remarks on the notion of solution, the assumptions on the ellipticity, and the notation used in the paper. \\\par
\textit{Remarks on viscosity solutions.}
For a general nonlinearity $F$, we say that $u$ is a viscosity supersolution to $F(D^2u)\le 0$ if it is lower-semicontinuous and if for every $\vphi$ which touches $u$ from below at some point $x_0$, there holds 
$$F(D^2\vphi(x_0))\le 0.$$
It is classical (see \cite{CaffarelliCabre1995}) that if $u$ is viscosity supersolution to 
$$F(D^2u) \le 0 \text{ on } B_1,$$
then its infimal convolution
$$u_\eps(x) \coloneqq \inf_{y \in B_1} u(y) + \frac{1}{2\eps}|x-y|^2$$
is a supersolution as well on the ball $B_{1-\delta}$ for $\delta= \delta(\eps)>0$ small, at least provided that $F$ is locally uniformly elliptic. There are two useful properties of this approximation that we will make use of in this paper. The first is that for each $\eps>0$, the function $u_\eps$ is uniformly semi-concave in $x$ and thus second-differentiable almost-everywhere by the Alexandroff theorem. The second useful property is that $u_\eps \le u$ and increases pointwise to $u$ as $\eps \to 0$. Viscosity subsolutions to $F(D^2u) \ge 0$ are defined analogously and are assumed to be upper-semicontinuous. This notion of viscosity solution is sometimes referred to by the term  ``$C-$viscosity solution". We will discuss different notions of solution in the final section of this paper. 
\\\par
\textit{Remarks on the assumptions.}
The assumption that $\lambda(\cdot)$ is continuous and positive rules out any pathological counter-examples to Harnack inequality or the existence theory. For instance, as pointed out in \cite{KimLee2025}, the function $u(x) = |x|$ is a viscosity solution to the degenerate elliptic equation
$$|x|^\alpha u''(x) =0 \text{ on } (-1,1)$$
for any $\alpha>0$, and $u$ certainly does not satisfy a Harnack inequality. As we discuss in Section \ref{sec: othernotions}, the infimal convolutions of $u(x)=|x|$ are \text{not} viscosity supersolutions, because all of the parabolas anchored near the origin touch precisely at the origin, where the equation provides no control. This is a striking new feature of the degenerate problem, and suggests that the $C$-viscosity notion of solution is incompatible with equations that allow for ellipticities vanishing at even just a single point. The discussion of which notions of solution allow for removing the strict positivity of the ellipticity is relegated to the end of the paper.

We do not assume or make use of any modulus of continuity for $\lambda(\cdot)$, which would naturally break down in any applications to homogenization. The estimates in our paper do not depend on any kind of pointwise lower bound on $\lambda(
\cdot)$. The natural way of phrasing our result then is that it is a uniform unit-scale Harnack estimate for a class of elliptic equations whose lower ellipticities may become arbitrarily small.

Lastly, we discuss the coefficient $\Gamma(B)$. It arises as follows. If $u$ is a solution to \eqref{eq: degen} (or a corresponding Pucci inequality) on $B_1$, then $u_r(\cdot):B_{1/r} \to \R$ defined by $u_r(x)\coloneqq u(rx)$ solves the corresponding equation with ellipticity $\lambda_r(\cdot)$ defined analogously. One sees easily that
$$\left(\frac{1}{|B_1|}\int_{B_1}\lambda_r^{-p}\right)^{1/p} = \left(\frac{1}{|B_r|}\int_{B_r} \lambda^{-p}\right)^{1/p}.$$
Hence, $\Gamma(\cdot)$ arises naturally when rescaling estimates. 
\\\par
\textit{Remarks on notation.}
Most of the notation we use is very standard. We wish to point out that at times we use the notation $A \lesssim B$ to denote $A \le cB$ for some absolute constant $c$. By ``absolute constant", we refer to constants depending on $p,d$, or nothing at all. Sometimes for clarity in stating estimates we refer to such constants by merely $c$, dropping their dependences which are of no interest to us. We clarify a few more pieces of notation for posterity. The expression $\ds \osc_A u$ denotes the oscillation of $u$ on a set $A$, or, in other words, $\ds \osc_A u \coloneqq \sup_A u - \inf_A u.$ We say that two symmetric matrices $M$ and $N$ have the ordering $M \ge N$ if all of the eigenvalues of $M -N$ are nonnegative. Lastly, $\lambda^{-1}$ always refers to the reciprocal $ \frac1\lambda$. 
\subsection{Acknowledgments}
The author would like to thank Luis Silvestre for many helpful discussions, as well as Sehyun Ji and Manos Katriadakis for reading an earlier version of this manuscript.
\section{On the optimality of some results}
\label{sec: optimality}
In this section we present examples showing that several standard estimates break down in the non-uniformly elliptic regime. By way of explicit construction, we show first that if the ellipticity $\lambda$ satisfies \eqref{eq: keyassumption} holds with $p<\infty$, then no $L^\eps$ inequality for supersolutions to \eqref{eq: degen} is valid for any $\eps>0$. This is consistent with the result of Armstrong, Silvestre, and Smart in \cite{ArmstrongSilvestreSmart2012}, who showed that in the uniformly elliptic setting $\lambda_0I \le a_{ij} \le I$, the $L^\eps$ estimate requires $\eps \lesssim \lambda_0$. Therefore in our setting, where the lower ellipticity $\lambda$ is allowed to become arbitrarily small, one should not expect any classical power-type $L^\eps$ estimate to hold.

Afterwards, we show that for every $p<d-1$, the Harnack inequality fails for the class of equations \eqref{eq: degen}. In fact, our construction shows that there is not even a \textit{weak} Harnack inequality in this regime.
\begin{prop}
\label{prop: noLeps}
Consider any dimension $d \ge 2$ and fix an exponent $p<\infty$. Then for every $\eta>0$ small, there exist  coefficients $a_{ij}^\eta$ and corresponding nonnegative viscosity supersolutions $u_\eta$ on the ball $B_{1/e}$ such that 
\begin{itemize}
\item $a_{ij}^\eta$ is continuous and uniformly elliptic in $x$ for each $\eta>0$,
\item $I \ge a_{ij}^\eta(x) \ge \lambda_\eta(x)  I$ where $\lambda_\eta$ satisfies 
$$\limsup_{\eta \to 0^+} \|\lambda_\eta^{-1}\|_{L^p(B_{1/e})}  <\infty,$$
\item $u_\eta$ is a supersolution to 
$$a_{ij}^\eta\partial_{ij}u_\eta \le 0 \text{ on } B_{1/e}$$
\item  $\ds \inf_{B_{1/(2e)}} u_\eta \simeq 1$ ,
\end{itemize}
and yet
$$\int_{B_{1/(2e)}} u_\eta^\eps \to +\infty \text{ as } \eta \to 0$$
for every $\eps>0$.

In fact, the supersolutions $u_\eta$ may be taken to be $C^{1,1}$ and hence $W^{2,\theta}$ strong-supersolutions.
\end{prop}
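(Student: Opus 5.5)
The plan is an explicit radial construction. Take a radially decreasing, convex profile $u(r)$, so $u'<0$ and $u''>0$, and a matrix that is degenerate only in the radial direction:
$$a_{ij}(x) = \lambda(x)\, e_r\otimes e_r + (I - e_r\otimes e_r), \qquad e_r = x/|x|.$$
For radial $u$, the Hessian $D^2u$ has eigenvalue $u''$ in the radial direction and $u'/r$ (with multiplicity $d-1$) tangentially. Hence
$$a_{ij}\partial_{ij}u = \lambda u'' + (d-1)\frac{u'}{r}.$$
This is $\le 0$ exactly when $\lambda \le (d-1)|u'|/(r u'')$.

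Pure powers $u=r^{-\beta}$ force $\lambda\lesssim 1/\beta$, which is uniform ellipticity and consistent with the Armstrong--Silvestre--Smart constraint. So I would choose a profile that grows faster than every power. With $L(r) \coloneqq \log(1/r)$, which is $\ge 1$ on $B_{1/e}$, take
$$u(r) = \exp\big(L(r)^2\big).$$
Then $u' = -2L u/r$ and $u'' = (4L^2+2L)u/r^2$. The admissible ellipticity is therefore
$$\lambda(r) = \frac{(d-1)\,2L}{4L^2+2L} \simeq \frac{d-1}{2L(r)},$$
so $\lambda^{-1}\simeq \log(1/r)$. This lies in $L^p(B_{1/e})$ for every $p<\infty$.

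The mechanism is that $u^\eps = r^{-\eps L(r)}$ behaves like a power whose exponent tends to infinity at the origin. Thus $\int_{B_{1/(2e)}} u^\eps\,dx = c\int_0^{1/(2e)} r^{d-1-\eps\log(1/r)}\,dr = +\infty$ for every $\eps>0$. Meanwhile $\inf_{B_{1/(2e)}}u = u(1/(2e)) = \exp((\log 2e)^2)\simeq 1$, since $u$ is decreasing in $r$. One may also take $\lambda\le 1$ after a harmless constant adjustment, e.g.\ $\lambda=\min(1,\cdot)$, because smaller $\lambda$ only helps the inequality.

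Next I would regularize at scale $\eta$. On $\{\eta\le r<1/e\}$ keep $u_\eta=u$. On $\{r<\eta\}$ replace $u$ by a concave cap $A-Br^2$, with $A,B$ chosen so that the value and first derivative match at $r=\eta$. This requires $2B\eta = |u'(\eta)|>0$, so $B>0$ and the cap really is concave. The resulting $u_\eta$ is $C^{1,1}$ and nonnegative. Since $u_\eta\ge u(1/(2e))$ on $B_{1/(2e)}$ is unchanged, $\inf_{B_{1/(2e)}} u_\eta\simeq 1$ still holds. Also $\int_{B_{1/(2e)}}u_\eta^\eps \ge \int_{\eta<|x|<1/(2e)} u^\eps \to\infty$ as $\eta\to 0$.

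The coefficients on the cap are arbitrary, because $D^2u_\eta\le 0$ there and any positive semidefinite $a$ gives $a_{ij}\partial_{ij}u_\eta\le 0$. I would set
$$a^\eta = \mu(r)\,e_r\otimes e_r + (I-e_r\otimes e_r) \quad\text{on } \{\eta/2\le r\le \eta\},$$
with $\mu$ interpolating continuously from $\lambda(\eta)$ at $r=\eta$ to $1$ at $r=\eta/2$, and $a^\eta = I$ on $B_{\eta/2}$. Then $a^\eta$ is continuous, including at the origin where $e_r$ is singular, and uniformly elliptic with constant $\lambda(\eta)>0$. Moreover $\lambda_\eta^{-1}\le \max(1,\lambda^{-1})$ pointwise, so $\|\lambda_\eta^{-1}\|_{L^p}$ is bounded uniformly in $\eta$.

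The main point needing care is the viscosity (and strong) supersolution property across the gluing sphere $\{r=\eta\}$, where $u_\eta$ is only $C^{1,1}$. Away from this sphere $u_\eta$ is $C^2$, so it is a classical, hence viscosity, supersolution. On the sphere I would argue as follows. If a smooth $\varphi$ touches $u_\eta$ from below at $x_0$ with $|x_0|=\eta$, then $D\varphi(x_0)=Du_\eta(x_0)$ because $u_\eta$ is $C^1$. Restricting $u_\eta-\varphi\ge 0$ to each side of the sphere and using the one-sided second-order expansions shows $D^2\varphi(x_0)\le D^2u_\eta^{\pm}(x_0)$ for the one-sided Hessians. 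Here the tangential parts agree since $u'$ is continuous, and only the radial second derivative jumps. Using the outer side, $a^\eta_{ij}\partial_{ij}\varphi(x_0)\le a^\eta_{ij}\partial_{ij}u^{+}_\eta(x_0)\le 0$. Since $u_\eta\in C^{1,1}$, the inequality also holds almost everywhere, which gives the $W^{2,\theta}$ strong-supersolution statement.
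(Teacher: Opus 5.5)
Your construction is correct in substance and is essentially the paper's. You build a radial supersolution for coefficients $\lambda\,e_r\otimes e_r+(I-e_r\otimes e_r)$ with a profile growing faster than every power of $1/r$, then glue in a concave paraboloid cap in $C^1$ at scale $\eta$; this is the paper's $C^{1,1}$ variant. Your one-sided Hessian argument on $\{r=\eta\}$ fills in what the paper only asserts.

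The only real difference is the profile. The paper takes $e^{cr^{-\beta}}$ with $\lambda=r^\beta$ and $\beta<d/p$, so its coefficient family depends on $p$. Your $e^{(\log 1/r)^2}$ needs only $\lambda^{-1}\simeq\log(1/r)$. That gives a single family working for all $p<\infty$ at once, and even with $\exp(c\lambda^{-1})\in L^1$ for small $c$, which is a slightly stronger negative result.

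There is one computational slip that must be fixed. In fact $u''=(4L^2+2L+2)\,u/r^2$. With your stated $\lambda=2(d-1)L/(4L^2+2L)$ one gets $\lambda u''+(d-1)u'/r=2(d-1)u/\bigl(r^2(2L+1)\bigr)>0$, so as written $u$ is not a supersolution in the outer region. Take instead $\lambda(r)=\min\{1,(d-1)/(4L(r))\}$. It satisfies $\lambda\le 2(d-1)L/(4L^2+2L+2)$ because $L\ge 1$ on $B_{1/e}$, it is still increasing in $r$, and it is still $\simeq 1/\log(1/r)$. The rest of your argument then goes through unchanged: uniform ellipticity with constant $\lambda(\eta)$, the pointwise bound $\lambda_\eta^{-1}\le\max(1,\lambda^{-1})$, and the interface check.
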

\begin{proof}
Fix $\beta>0$ satisfying 
\begin{equation}
\label{eq: firstbetaeq}
    0<\beta<\frac dp.
\end{equation}
We first search for a nonnegative supersolution $w(x) = w(|x|) \coloneqq \vphi(r)$ to the equation
$$a_{ij}\partial_{ij} w \le 0 \text{ on } B_{1/e} \setminus \{0\},$$
where the coefficient matrix is given by
$$A(x) \coloneqq r^\beta e_r\otimes e_r + (I-e_r\otimes e_r),$$
with $e_r$ denoting the radial direction. The supersolution inequality leads us to requiring that $\vphi$ satisfy 
$$r^\beta \vphi''(r) + \frac{d-1}{r} \vphi'(r) \le 0 \text{ for all } 0<r<\frac1e,$$
which leads us to try 
$$w(x) \coloneqq \vphi(r) \coloneqq e^{cr^{-\beta}}$$
for some $c>0$. Plugging in, $w$ is a supersolution on the punctured ball if
$$0 \ge a_{ij}\partial_{ij}w = r^{-2}\beta c \vphi\left[(\beta+1) + (c\beta -(d-1))r^{-\beta}\right].$$

This is certainly the case provided that $c\beta<(d-1)$ and
$$(1+\beta) + (c\beta -(d-1))e^{\beta}\le 0.$$

The inequality $1+\beta < e^\beta \le (d-1)e^\beta$ is true for any $\beta>0$ and $d\ge 2$, and now one takes $c$ even smaller if necessary to guarantee the above inequality. The constant $c$ depends on $\beta$, but for our purposes $\beta$ is a fixed exponent satisfying \eqref{eq: firstbetaeq}. We conclude that $w$ is a supersolution on the punctured ball, as desired.

Now, for $\eta>0$ small and fixed, we define the radius $r_\eta \coloneqq \eta^{1/\beta},$
and the coefficients

$$A^\eta(x)\coloneqq \begin{cases} A(x) & \text{ for } |x|>r_\eta, \\ \left(1- \frac{|x|}{r_\eta}\right) \eta I + \frac{|x|}{r_\eta} A(x) &\text{ for } |x|\le r_\eta.\end{cases} $$

The precise definition of $A^\eta$ on $B_{r_\eta}$ is not important for our purposes, only that it is uniformly elliptic and well-defined. The matrix $A$ then satisfies 
$$\lambda_\eta(x) I \le A(x) \le I \text{ in } B_{1/e}$$
where $\lambda(x) = \lambda(r)$ is given by 
$$\lambda_\eta(r) \coloneqq \begin{cases} r^\beta & \text{ for } r > r_\eta, \\  \eta & \text{ for } 0 \le r \le r_\eta.\end{cases}$$
One easily checks that 
$$\limsup_{\eta \to 0} \|\lambda_\eta^{-1}\|_p < \infty$$
precisely under the stated condition \eqref{eq: firstbetaeq} on $\beta$. 

Now we build our supersolution $u_\eta$ by truncation. For $M= M_\eta$ large and to be decided, we set
$$u_\eta  \coloneqq \min(w,M).$$

The truncation occurs on the sphere of radius $r =  c^{1/\beta}(\log M)^{-1/\beta}$. To ensure that $u_\eta$ is a viscosity supersolution with coefficients $A^\eta$, we choose $M$ through the relation $\eta \coloneqq c (\log M)^{-1}$. It is then immediate that on the whole ball $B_{1/e}$, there holds $a_{ij}^\eta \partial_{ij}u_\eta \le 0$ in the viscosity sense. It is also clear that
$$\inf_{B_{1/(2e)}} u_\eta  \simeq 1 \text{ for all  } \eta >0.$$
But for any $\eps > 0$, we may estimate the integrals as
 $$\left(\int_{B_{1/(2e)}} u_\eta^\eps\right)^{1/\eps} \simeq M (\log M)^{-d/(\beta \eps)} \to + \infty \text{ as } \eta \to 0,$$
 as claimed.

We now briefly discuss how a small modification allows us to make our supersolutions $C^{1,1}$. Instead of truncating $u_\eta \equiv M$ on $B_{r_\eta}$, we instead fit in a downward parabola cap. To be more precise, define $\gamma>0$ by $\gamma \coloneqq -\vphi'(r_\eta)$. Define $$\tilde{u}_\eta(x) \coloneqq \begin{cases} \vphi(r) & \text{ for }|x| \ge r_\eta \\ \psi(r) & \text{ for }|x|<r_\eta\end{cases},$$ where $\psi(r)\ds \coloneqq M + \frac{\gamma}{2} r_\eta - \frac{\gamma}{2r_\eta} r^2.$

One easily checks that $\tilde{u}_\eta$ is $C^{1,1}$. The concavity of the downward parabolic cap ensures that $\tilde{u}_\eta$ is a supersolution, and the same desired conclusions clearly hold.
\end{proof}
\begin{remark}
As we stated in the introduction, this construction shows that Theorem $1.3$ from \cite{KimLee2025} is in fact false. We believe the issue with the proof lies near the end, where the scaling of the ellipticity is not accounted for when applying Lemma $9.23$ from \cite{GilbargTrudinger2001}. In particular, their unit-scale estimate breaks down when rescaled to cubes of arbitrarily small length, leading to an incorrect ink-spots argument.
\end{remark}
We now turn to showing that a Harnack inequality cannot hold in general for equations of the form \eqref{eq: degen}, if the ellipticity $\lambda$ only satisfies \eqref{eq: keyassumption} for $p<d-1$. The construction will also show that Weak Harnack inequality fails.
\begin{prop} 
\label{prop: noHarnack} Consider any dimension $d \ge 3$ and fix an exponent $p<d-1$. Then for every $r>0$ small, there exist coefficients $a_{ij}^r$ and corresponding nonnegative viscosity solutions $u_r$ on the ball $B_1$ such that 
\begin{itemize}
\item $a_{ij}^r$ is uniformly elliptic \text{ in } $B_1$ 
\item $I \ge a_{ij}^r(x) \ge \lambda_r(x)  I$ where $\lambda_r$ is continuous and satisfies 
$$\ds \limsup_{r\to 0} \|\lambda_r^{-1}\|_{L^p(B_{1})}  <\infty,$$
\item $u_r$ is a $C^{1,1}$ viscosity solution to 
$$a_{ij}^r \partial_{ij}u_r = 0 \text{ on } B_{1}$$
\end{itemize}
and yet
$$\frac{\sup_{B_{1/2}}u_r}{\inf_{B_{1/2}}u_r} \to +\infty \text{ as } r\to 0,$$
showing that a Harnack inequality does not hold in the case $p<d-1$. In fact, the solutions $u_r$ can be chosen such that $\inf_{B_1} u_r \to 1$ as $r\to 0$, but 
$$| \{u_r \le N\}\cap B_1| \to 0 \text{ as } r \to 0$$
for any finite number $N$. Therefore, there is no possible \textit{Weak Harnack} inequality in the case $p<d-1$, either.
\end{prop}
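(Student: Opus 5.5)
The plan is to build the counterexample by an explicit construction: singular radial profiles as in Proposition \ref{prop: noLeps}, but made radial in only $d-1$ of the variables. The singular set is then a line rather than a point, and $\lambda^{-p}$ is integrated over $(d-1)$-dimensional cross sections. This is what should move the integrability threshold from $d$ to $d-1$. Write $x=(x',x_d)$ with $x'\in\R^{d-1}$ and $\rho=|x'|$. I would take coefficients that are degenerate only in the $e_\rho$ direction:
$$A(x)=\rho^\beta e_\rho\otimes e_\rho+(I-e_\rho\otimes e_\rho),\qquad 0<\beta<\frac{d-1}{p}.$$
This range of $\beta$ is nonempty exactly because $p<d-1$. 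With it, $\lambda^{-1}=\rho^{-\beta}\in L^p(B_1)$, since $\int_0^1\rho^{-\beta p}\rho^{d-2}\,d\rho<\infty$. For $\vphi=\vphi(\rho)$ the equation becomes the ODE
$$\rho^\beta\vphi''+\frac{d-2}{\rho}\vphi'=0.$$
Here $d\ge3$ is needed, so that the tangential term $d-2$ is positive. Solving it gives
$$\vphi'(\rho)=-\exp\Big(\tfrac{d-2}{\beta}\rho^{-\beta}\Big),\qquad \vphi(\rho)=1+\int_\rho^{1}\exp\Big(\tfrac{d-2}{\beta}s^{-\beta}\Big)\,ds.$$
This is an exact $C^2$ solution of $a_{ij}\partial_{ij}\vphi=0$ off the axis. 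It is bounded below by $1$, it is huge near the axis, and $u\to\infty$ on the whole segment $\{x'=0\}\cap B_1$.

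The key steps, in order, are these.

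(1) Truncate at scale $r$. On the thin cylinder $\{\rho<r\}$ I would replace the coefficients by a uniformly elliptic matrix, for instance $r^\beta I$ interpolated continuously with $A$ as in the proof of Proposition \ref{prop: noLeps}. The norm $\|\lambda_r^{-1}\|_{L^p(B_1)}$ then stays bounded as $r\to0$, because on the cylinder it contributes at most about $r^{d-1-\beta p}\to0$.

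(2) Continue $u$ inside the cylinder as a genuine $C^{1,1}$ solution rather than a supersolution. A radial cap in $x'$ cannot work, since a concave cap is a strict supersolution and the maximum principle forbids an interior maximum. Instead I would exploit the free $x_d$-direction. Inside $\{\rho<r\}$ take $u=\vphi(r)+\frac{\gamma}{2r}(r^2-\rho^2)+\frac{\gamma(d-1)}{2r}\cdot\frac{x_d^2}{\mu}$, where $\gamma=-\vphi'(r)$ and $\mu$ is the $e_d$-eigenvalue of the coefficient matrix there. The $x_d$-convexity then exactly cancels the concavity in $x'$. The choice of $\gamma$ makes the gradient match across $\rho=r$, so the glued function is $C^{1,1}$ in $x'$. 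The mismatch in $x_d$ must be absorbed by letting the outer coefficients depend mildly on $x_d$. Alternatively, and more simply, one can add the same small term $\frac{\gamma(d-1)}{2r\mu}x_d^2$ globally. Outside the cylinder this term is compensated by replacing the constant $d-2$ in the ODE by $d-2-o(1)$, which keeps the growth rate of the exponential.

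(3) Read off the conclusions. We have $\inf_{B_1}u_r\to1$, attained near $\rho\approx1$. Meanwhile $u_r\ge\vphi(\rho)$ tends to $+\infty$ pointwise on $B_1\setminus\{x'=0\}$ as $r\to0$ for fixed $\rho$, because $\vphi$ is fixed and the truncation level $\vphi(r)$ tends to $\infty$. Hmm — as written, $\vphi$ is finite away from the axis, so this does not yet give $|\{u_r\le N\}|\to0$. To get that, I would normalize differently: set $v_r=u_r/\vphi(r_0)$, with a second, intermediate radius $r_0\to0$ chosen so that $\vphi$ is at least $N\,\vphi(r_0)$ on most of the ball. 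Equivalently, I would make the degeneracy sit on a family of thin cylinders of radius $\sim r$ around many parallel axes on a grid of spacing $\sim r^{\alpha}$. The cost of this family is $\sim r^{-(d-1)\alpha}\cdot r^{d-1-\beta p}\cdot(\text{rescaling})$, which stays bounded when $\beta p<d-1$ by the scale-invariant form of $\Gamma$. The solution is then large everywhere except within a thin neighbourhood of a single "drain" cylinder where it is near $1$. That gives both the blow-up of $\sup_{B_{1/2}}u_r/\inf_{B_{1/2}}u_r$ and $|\{u_r\le N\}\cap B_1|\to0$.

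The main obstacle I expect is step (2) and its interaction with step (3): producing an exact solution, not just a supersolution, that is $C^{1,1}$ across the gluing interfaces. Any cap must be balanced by convexity in the transverse $x_d$ direction without destroying the exponential gain. I would first try the explicit quadratic correction in $x_d$ together with a slight $x_d$-dependent adjustment of the coefficients. If that fails, I would fall back on solving a uniformly elliptic Dirichlet problem inside the cylinder with the outer data, and appeal to existence and $C^{1,1}$ regularity there, since that problem is uniformly elliptic for each fixed $r$.
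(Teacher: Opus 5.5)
Your construction has a genuine gap at exactly the step you flag (2), and it stems from the orientation of your profile. In step (2) the $x_d^2$ term is not small. Take inner coefficients $\lambda_{\text{in}}I_{d-1}\oplus\mu$; your formula tacitly takes $\lambda_{\text{in}}=1$. Cancelling the cap then forces $\partial_{dd}u=2c$ with $c=\frac{(d-1)\lambda_{\text{in}}\gamma}{2r\mu}$ and $\gamma=\exp(\frac{d-2}{\beta}r^{-\beta})$. So $c$ is bounded only if $\lambda_{\text{in}}\lesssim r/\gamma$, and then $\int_{\{\rho<r\}\cap B_1}\lambda_r^{-p}\gtrsim(\gamma/r)^p r^{d-1}\to\infty$. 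The glued function must equal $\vphi(r)+cx_d^2$ on $\{\rho=r\}$ for every $x_d$, so changing the outer coefficients cannot remove the mismatch. The term has to be carried outside, where it is an additive source $2c$ in the radial ODE. Replacing $d-2$ by $d-2-o(1)$ only changes the drift term by $o(|\vphi'|/\rho)=o(1)$ at $\rho\sim1$, so it cannot absorb this. Suppose you keep the term and solve $\rho^\beta\psi''+\frac{d-2}{\rho}\psi'=-2c$ with $\psi'(r)=-\gamma$. Then $-\psi'(\rho)\approx\frac{2c\rho}{d-2}$ for $\rho\gg r$, so $u=\psi(\rho)+cx_d^2$ contains a saddle of amplitude $\sim c$. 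Nonnegativity on $B_1$ then gives $\inf_{B_{1/2}}u\gtrsim c$, while $\osc_{B_{1/2}}u\lesssim\gamma r+c$. Hence $\sup_{B_{1/2}}u/\inf_{B_{1/2}}u\lesssim 1+\mu r^2/\lambda_{\text{in}}$. This is unbounded only if $\lambda_{\text{in}}=o(r^2)$, and then $\int_{\{\rho<r\}\cap B_1}\lambda_r^{-p}\gtrsim\lambda_{\text{in}}^{-p}r^{d-1}\to\infty$ for every $p\ge\frac{d-1}{2}$. With your step (1) choice $r^\beta I$, the ratio simply stays bounded. Heuristically, a ridge on the axis must be fed from the ends of the axis. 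That means trapping the diffusion in the $r$-cylinder for unit time, which costs $\lambda\lesssim r^2$ there. So even repaired, your route cannot reach the range $\frac{d-1}{2}\le p<d-1$.

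Step (3) has a second, independent defect, which you half notice. Because your profile is maximal on the axis, $\{u_r\le N\}$ contains the region where $\vphi=O(1)$, i.e.\ most of $B_1$. The infimum also sits away from the axis rather than on an interior ``drain''. Dividing by a constant changes nothing, since $|\{u\le N\inf u\}|$ is scale invariant. The grid picture presupposes a profile that is \emph{minimal} on an axis, and that is exactly the missing idea. The paper reverses the orientation: $u_r=1-\delta x_d^2+r^{-\sigma}\vphi_r(\rho)$, with $\vphi_r=\rho^\alpha-(1-\frac\alpha2)r^\alpha$ for $\rho\ge r$ and $\vphi_r=\frac\alpha2 r^{\alpha-2}\rho^2$ for $\rho\le r$. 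Here $\alpha\in(0,1)$ satisfies $(2-\alpha)p<d-1$, which is possible for every $p<d-1$ with $\alpha$ near $1$, and $0<\sigma<\frac{d-1-(2-\alpha)p}{p}$. Outside, the coefficients $\rho^{2-\alpha}\bigl(\beta e_\rho\otimes e_\rho+I_{d-1}-e_\rho\otimes e_\rho\bigr)+e_d\otimes e_d$ with $\beta\approx\frac{d-2}{1-\alpha}$ balance the radial concavity of $\rho^\alpha$ against its $d-2$ tangential convexities; this is where $d\ge3$ enters. Moreover, $\rho^{-(2-\alpha)p}$ is integrable in the $d-1$ variables $x'$. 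Inside, $\lambda_{\text{in}}=r^{2-\alpha+\sigma}/\log(1/r)$ gives $(d-1)\lambda_{\text{in}}\cdot\alpha r^{\alpha-2-\sigma}=\alpha(d-1)/\log(1/r)=2\delta\to0$. Because the cap curvature is polynomial rather than exponential in $1/r$, the compensating $x_d$-term is a \emph{vanishing} concavity that only shifts $\beta$ by $o(1)$. At the same time $\int_{\{\rho<2r\}}\lambda_r^{-p}\lesssim r^{d-1-p(2-\alpha+\sigma)}(\log 1/r)^p$ stays bounded. Then $u_r=1-\delta x_d^2$ on the axis and $u_r\approx 1+r^{-\sigma}\rho^\alpha$ off it. A single cylinder therefore gives both the Harnack and the weak Harnack failure. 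This needs one further check: the viscosity inequalities on $\{\rho=r\}$, where $\partial_{\rho\rho}u_r$ jumps from $\alpha r^{\alpha-2-\sigma}$ down to $\alpha(\alpha-1)r^{\alpha-2-\sigma}$.
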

\begin{proof}
We construct our example by smoothing out cusps in $d-1$ variables, which curve ever so slightly in the transverse direction. Fix $r>0$ small. With $p<d-1$ fixed, we first fix a parameter $\alpha \in (0,1)$ such that 
\begin{equation}\label{eq: alphaeq} (2-\alpha)p < d-1,
\end{equation}
and, with this choice of $\alpha$ being made, then fix a parameter $\sigma>0$ such that
\begin{equation}
    \label{eq: sigmaeq}
    0<\sigma < \frac{d-1 - (2-\alpha)p}{p}.
\end{equation}
We use the familiar notation $x\coloneqq (x', x_d)$, and set $\rho \coloneqq |x'|$. With this in hand, we define
$$u_{r}(x) \coloneqq 1-\delta x_d^2 + r^{-\sigma} \vphi_r(\rho),$$
where $\delta = \delta(r)>0$ is to be decided, and the function $\vphi_r(\cdot)$ is given by 
$$\vphi_r(\rho) \coloneqq \begin{cases}
    \frac\alpha2 r^{\alpha -2} \rho^2 & \text{ for }  0 \le \rho \le r,\\
    \rho^\alpha - \left(1-\frac\alpha2\right)r^\alpha & \text{ for } \rho \ge r.
\end{cases}$$
It is not hard to check that $u_r$ is $C^{1,1}$. Before we define our coefficients for which $u_r$ is a solution, we define a constant $\beta$ by fixing
\begin{equation}
\label{eq: betaeq}
    \alpha r^{-\sigma}((d-2) -\beta(1-\alpha)) = 2\delta
\end{equation}
We remark that, as $\delta = \delta(r) \to 0$, this amounts to choosing $\beta \to \frac{d-2}{1-\alpha}$, which is just a fixed number for our purposes. It is important to note that $\beta$ does not become arbitrarily small or large as $r\to 0$. 

We denote by $e_\rho$ the radial unit vector in the $x'$ variables. Now, we define the coefficients piecewise as follows:
$$A^r(x)\coloneqq \begin{cases} A_{\text{in}}(x) & \text{ for } |x'| \le r\\
A_{\text{out}}(x) & \text{ for } |x'|>r,
\end{cases}$$
where, with obvious abuses of notation,
$$A_{\text{out}}(x) \coloneqq \rho^{2-\alpha}\left(\beta e_\rho\otimes e_\rho + (I_{d-1} - e_\rho \otimes e_\rho)\right) + e_d \otimes e_d$$
and
$$A_{\text{in}}(x) \coloneqq 
\text{diag}(\lambda_{\text{in}}, \ldots, \lambda_{\text{in}}, 1),$$
with the constant $\lambda_{\text{in}}$ being given by 
$$\lambda_{\text{in}} \coloneq \ds r^{2-\alpha +\sigma}(\log 1/r)^{-1}.$$

Then, we certainly have
$$\lambda_r(x) I \le A_r(x)\le C I \text{ in } B_1,$$
where the constant $C$ depends only on $p,d,$ and $\alpha$, and
$$\lambda_r(x) \simeq \begin{cases} \lambda_{\text{in}}& \text{ for } |x'| \le r\\
(1-\frac{|x'|-r}{r})\lambda_{\text{in}} + \frac{|x'|-r}{r} |x'|^{2-\alpha} & \text{ for } r\le |x'| \le 2r,\\
|x'|^{2-\alpha} & \text{ for } |x'|>2r.
\end{cases}$$
The only point defining $\lambda_r(\cdot)$ on the annulus $r\le |x'| \le 2r$ in this way was to demonstrate that the minimal eigenvalue of the matrix $A$ can be minorized by a \textit{continuous} function satisfying the requisite $L^p$ bound. 

Integrating in cylindrical coordinates, one finds 
\begin{align} \int_{B_1} \lambda_r^{-p} &\lesssim_d \int_{|x'| \le 2r} r^{p(\alpha-2) - p\sigma}(\log(1/r))^p \ dx' + \int_{2r <|x'| <1} |x'|^{p(\alpha-2)} \ dx'\\
& \simeq_{d,p,\alpha} r^{p(\alpha-2) - p\sigma+ d-1}(\log 1/r)^{p} + 1
\end{align}
which stays bounded as $r\to 0$ precisely under the stated conditions \eqref{eq: alphaeq} and \eqref{eq: sigmaeq} on $\alpha$ and $\sigma$. 

Next, we check that $u_r$ is a viscosity solution on $B_1$, and in fact classical away from the interface $|x'|=r$. Indeed, for $|x'| <  r$, we simply have
$$a_{ij}^r \partial_{ij}u_r = \alpha  (d-1)\lambda_{\text{in}} r^{\alpha-2-\sigma} - 2 \delta=0$$
provided we choose
$$\delta = \delta(r) \coloneqq \frac{\alpha(d-1)}{2} (\log 1/r)^{-1}, $$
which certainly tends to $0$ as $r\to 0$. Next, we remark that that the map $x' \mapsto |x'|^\alpha$ has radial curvature equal to  $\alpha(\alpha -1) \rho^{\alpha-2}$ and $d-2$ tangential curvatures equal to $\alpha \rho^{\alpha-2}$. Then we may see that on $|x'|>r$, there holds
\begin{align} a_{ij}^r \partial_{ij}u_r &= \rho^{2-\alpha}r^{-\sigma}\left(\beta \alpha(\alpha-1) \rho^{\alpha -2} + \alpha(d-2) \rho^{\alpha -2}\right) - 2\delta\\
&= \alpha r^{-\sigma}(d-2 - \beta(1-\alpha)) - 2\delta =0
\end{align}
precisely under the stated condition \eqref{eq: betaeq}.

Finally, we briefly explain how to see that $u$ satisfies the definition of viscosity solution on the sphere $\{|x'|=r\}$. Fix such an $x_0$. We work in the cylindrical frame $e_\rho, \tau_1, \ldots \tau_{d-2}, e_d$, where $\tau_1, \ldots \tau_{d-2}$ are tangent to the sphere $\{|x'|=r\}$. One may check that $\partial_{\tau_i \tau_i} u_r$ and $\partial_{dd}u_r$ are continuous across the sphere, taking values
$$\partial_{\tau_i \tau_i} u_r = a \coloneqq \alpha r^{\alpha -2-\sigma}$$
$$\partial_{dd}u_r = -2\delta.$$
On the other hand, $\partial_{\rho \rho} u_r$ has a jump across the sphere. Indeed, 
$$\lim_{s \to 0^+} \partial_{\rho\rho}u(x_0 - se_\rho) = a$$
while
$$\lim_{s \to 0^+} \partial_{\rho\rho}u(x_0+ se_\rho) = b \coloneqq \alpha(\alpha-1)r^{\alpha-2-\sigma} <0<a.$$

Accordingly, if $\psi$ touches $u_r$ from above at $x_0 \in \{|x'|=r\}$, then, restricting $u_r - \psi$ to the inward normal half-line, the tangential lines, and the $x_d$-line, one sees that
$$\partial_{\rho\rho}\psi(x_0) \ge a, \qquad \partial_{\tau_i\tau_i} \psi(x_0) \ge a, \qquad \partial_{dd} \psi(x_0) \ge - 2\delta.$$
Thus
$$a_{ij}^r(x_0)\partial_{ij}\psi(x_0) = \lambda_{\text{in}}\left(\partial_{\rho \rho}\psi(x_0) + \sum_{i=1}^{d-2} \partial_{\tau_i\tau_i}\psi(x_0)\right) + \partial_{dd}\psi(x_0) \ge (d-1)\lambda_{\text{in}}a - 2\delta =0.$$

On the other hand, if $\psi$ touches $u_r$ from below at $x_0 \in \{|x'|=r\}$, then we restrict $u_r -\psi$ to the outward normal half-line and use otherwise analogous reasoning to find 
$$\partial_{\rho\rho}\psi(x_0) \le b, \qquad \partial_{\tau_i\tau_i} \psi(x_0) \le a, \qquad \partial_{dd} \psi(x_0) \le - 2\delta.$$
Then
$$a_{ij}^r(x_0) \partial_{ij}\psi(x_0) \le \lambda_{\text{in}}(b+ (d-2)a) -2\delta \le 0$$
since $b<a$. We conclude that $u_r$ is indeed a viscosity solution on the whole ball.

We conclude the proof by noticing that
$$\inf_{B_{1/2}} u_r = 1- \frac{\delta(r)}{4} \to 1 \text{ as } r\to 0,$$
while
$$\sup_{B_{1/2}}u_r = 1 + r^{-\sigma}\left(2^{-\alpha} -\left(1-\frac\alpha2\right)r^\alpha\right) \to +\infty \text{ as } r\to 0.$$
Furthermore, it is clear that $|\{u_r \le N\} \cap B_1|\to 0$ as $r\to 0$ for any finite $N<\infty$; the functions $u_r$ converge locally uniformly to $+\infty$ away from the axis $\{x'=0\}$. Indeed, one may compute that
$\{u_r \le N\} \cap B_1 \subset \{(x',x_d)\subset B_1 \ : \ |x'| \le C_N r^{\sigma/\alpha}\}$, assuming we also choose $\sigma<\alpha$. 
\end{proof} 
\begin{remark}
    There is no integral estimate for supersolutions in the regime $p<d-1$. Indeed, take any nonnegative, increasing, and unbounded function $f: \R \to \R$. Were it to be the case that for all nonnegative supersolutions to \eqref{eq: degen} satisfying $\ds \inf_{B_{1/2}} u =1$, there existed a constant $C = C(d,p, \Gamma(B_1))$ such that 
    $$\int_{B_{1/2}} f(u) \le C,$$
    we would derive for any such supersolution that $\ds | \{u \ge N\}\cap B_{1/2}| \le \frac{C}{f^{-1}(N)} \to 0$ as $N \to \infty$. The above construction contradicts this conclusion. 
\end{remark}
\section{The $\log-L^\eps$ estimate}
\label{sec: logLeps}
In this section, we establish a Weak Harnack type inequality for supersolutions. The first step is to produce a measure-to-pointwise estimate that says that if a supersolution is above a certain threshold on a large enough portion of a ball, then it cannot dip too low towards the center. It is a very weak improvement of minimum estimate, and its efficacy decreases drastically on balls of bad ellipticity, as measured by the coefficient $\Gamma(B)$. We follow the approach of Savin in \cite{Savin2005} and touch supersolutions from below by paraboloids. One departure from the uniformly elliptic setting is that the Jacobian of the contact-to-vertex map is no longer bounded, leading to an integrability requirement on $\lambda^{-1}$ that depends on the dimension. An estimate of this form cannot hold for $p<d-1$, as shown in Proposition \ref{prop: noHarnack}.
    \begin{lem}
        \label{lem: weakharnack1} Assume $u$ is a nonnegative supersolution to
        $$\mathcal{P}^-_{\lambda(\cdot),1}(D^2u) \le 0\text{ in } B_1$$
        where the ellipticity $\lambda(\cdot)$ satisfies \eqref{eq: structuralassumption} and \eqref{eq: keyassumption} for some $p>d-1$. Then there exists an absolute constant $c>0$ such that if $u(0)\le 1$, then 
        $$|\{u \le 2\} \cap B_1| \ge c \Gamma(B_1)^{-\frac{p(d-1)}{p-(d-1)}} |B_1|.$$
    \end{lem}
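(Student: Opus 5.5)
We follow Savin's sliding-paraboloid argument and touch $u$ from below by concave paraboloids. Fix a small aperture $a>0$ (an absolute constant, say $a=1/8$). For each vertex $y\in B_{1/4}$ consider $P_y(x) = -\frac a2|x-y|^2 + c_y$, and slide it up from $-\infty$ until it first touches $u$ from below on $\overline{B_1}$. Since $u\ge 0$ and $u(0)\le 1$, the height satisfies $c_y\le 1 + \frac a2|y|^2$. On $\partial B_1$ we have $P_y<0\le u$, so the contact point $x=x(y)$ lies in the interior. Moreover $u(x)\le P_y(x)\le c_y \le 2$ for $a$ small. Let $E$ be the set of contact points. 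We show $|E|\gtrsim \Gamma(B_1)^{-p(d-1)/(p-(d-1))}$; since $E\subset \{u\le 2\}\cap B_1$, this gives the lemma.

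To estimate $|E|$, first replace $u$ by its inf-convolution $u_\eps$, as recalled in the preliminaries. This is a supersolution, semiconcave, and twice differentiable a.e. We run the argument for $u_\eps$ and pass to the limit at the end. At a contact point, $Du(x)= -a(x-y)$, so $y = x + a^{-1}Du(x)$. Hence the vertex map $T(x)= x+a^{-1}Du(x)$ maps $E$ onto $B_{1/4}$. By the area formula (valid since $u$ is $C^{1,1}$ from below on $E$, by the standard Savin argument),
$$|B_{1/4}| \le \int_E |\det DT| = a^{-d}\int_E \det(D^2u + aI).$$
The touching gives $D^2u(x)\ge -aI$, so $M\coloneqq D^2u+aI\ge 0$. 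The supersolution inequality $\mathcal P^-_{\lambda(x),1}(D^2u)\le 0$ is then applied to the test paraboloid at $x$. Since $D^2u = M-aI$ and $\lambda\le 1$, it yields
$$\lambda(x)\,\tr (M - aI)^+ \le \tr(M-aI)^- \le da.$$
Thus every eigenvalue of $M$ is at most $a + da/\lambda(x)\lesssim a/\lambda(x)$.

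This is where the dimension-dependent loss enters, and it is the key step. The naive bound $\det M\lesssim (a/\lambda)^d$ would need $p\ge d$. To do better, we exploit that the inequality controls the \emph{sum} of the large eigenvalues by $da/\lambda$, while the smallest directions are not forced to blow up. The plan is to show that $E$ essentially lies where at most $d-1$ eigenvalues are large, so that $\det M \lesssim a^d \lambda(x)^{-(d-1)}$. To obtain this, we use the extra freedom in the vertex: vertices $y$ can be taken in a slightly larger family (varying the aperture $a$ as well, i.e. a $(d+1)$-parameter family). Alternatively, we can use AM–GM on $\tr M$ combined with the fact that one eigenvalue stays $\lesssim a$. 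The justification is that $\tr(M-aI)^-\le da$, together with $\lambda\tr(M-aI)^+\le da$, means the positive part of $D^2u$ is small in trace only after weighting by $\lambda$. We expect this to be the main obstacle: establishing the $\lambda^{-(d-1)}$ Jacobian bound rather than $\lambda^{-d}$. The paper's remark that the threshold $d-1$ is sharp (Proposition \ref{prop: noHarnack}) suggests exactly one direction escapes degeneration.

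Granting $|\det DT|\lesssim \lambda^{-(d-1)}$ on $E$, we conclude by Hölder with exponents $\frac{p}{d-1}$ and $\frac{p}{p-(d-1)}$:
$$c\le \int_E \lambda^{-(d-1)} \le \Big(\int_{B_1}\lambda^{-p}\Big)^{\frac{d-1}{p}} |E|^{\frac{p-(d-1)}{p}} \simeq \Gamma(B_1)^{d-1}|E|^{\frac{p-(d-1)}{p}}.$$
This gives $|E|\gtrsim \Gamma(B_1)^{-\frac{p(d-1)}{p-(d-1)}}$, the claimed exponent. Finally, we let $\eps\to0$. Continuity and positivity of $\lambda$ are used so that the inf-convolution remains a supersolution with essentially the same $\lambda$ on compact subsets. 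Since $\{u_\eps\le 2\}$ decreases to $\{u\le 2\}$ as $u_\eps\uparrow u$, the measure bound passes to the limit (with a slightly enlarged level, absorbed by rescaling).
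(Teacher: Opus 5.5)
\textbf{There is a genuine gap: the Jacobian bound is assumed, not proved.} Your architecture matches the paper's: the vertex map $T(x)=x+a^{-1}Du(x)$ onto $B_{1/4}$, the area formula, and H\"older with exponents $\frac{p}{d-1}$ and $\frac{p}{p-(d-1)}$. But you explicitly grant the one non-routine step, $\det(D^2u+aI)\lesssim a^d\lambda^{-(d-1)}$ on $E$, and neither route you float delivers it. Varying the aperture to get a larger family of paraboloids is beside the point. AM--GM on $\tr M$ with $\tr M\lesssim a/\lambda$ only returns $\lambda^{-d}$. You mention ``the fact that one eigenvalue stays $\lesssim a$'' but give no reason for it, and that reason is the whole content of the step.

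It follows in one line from $\lambda(x)>0$. At a contact point $x$ where $u$ is twice differentiable, $D^2u(x)$ cannot be positive definite. Otherwise $(D^2u(x))^-=0$, and the supersolution inequality would read $\lambda(x)\tr D^2u(x)\le 0$ with both factors positive. So the smallest eigenvalue of $D^2u(x)$ is $\le 0$, and since $D^2u(x)\ge -aI$ it lies in $[-a,0]$. Hence $M$ has an eigenvalue in $[0,a]$. As you showed, the other $d-1$ eigenvalues are at most $a+da/\lambda(x)$. Multiplying gives $\det M\le a\,(a+da/\lambda(x))^{d-1}\lesssim a^d\lambda(x)^{-(d-1)}$. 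This is exactly how the paper obtains $\det Dm\lesssim\lambda^{1-d}$, and the same ``one eigenvalue in $[0,1]$'' device is reused in the doubling lemma and the local maximum principle.

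\textbf{Smaller corrections.}
\begin{itemize}
\item The aperture must be large, not small. You have $c_y\le 1+\frac{a}{32}$, and $u(z)+\frac a2|z-y|^2\ge\frac{9a}{32}$ for $z\in\partial B_1$. Interior contact therefore requires $a>4$, while $c_y\le 2$ requires $a\le 32$. With $a=1/8$, your claim $P_y<0$ on $\partial B_1$ fails (already for $u\equiv 1$), and boundary contact cannot be excluded. The paper uses $3|x-y|^2$, i.e.\ $a=6$.
\item The supersolution inequality must be used pointwise at points of second differentiability of the semiconcave $u_\eps$. Applied to the touching paraboloid it only gives the vacuous $\mathcal{P}^-_{\lambda,1}(-aI)=-da\le 0$.
\end{itemize}
Using all vertices in $B_{1/4}$, rather than only those in $\{u>2\}\cap B_{1/4}$ as the paper does, is a harmless simplification.
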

\begin{proof}
By replacing $u$ by its infimal convolution $u_\eps$, it is enough to consider $u$ semi-concave, and therefore first and second-differentiable except on a set $E\subset B_1$ of measure zero. Consider $U \coloneqq \{u>2\} \cap B_{1/4}$. For each $x \in U$ we look at the inf-convolution 
$$h(x) \ds \coloneqq  \inf_{y \in B_1} u(y) + 3|x-y|^2.$$

Now, since $u \ge 0$, one obtains for $x \in U$ and $y \in \partial B_1$ that $u(y) + 3|x-y|^2 \ge \frac{27}{16}.$ On the other hand, since $u(0)\le 1$, for any $x \in U$ there holds $h(x) \le 1+ 3|x|^2 \le \frac{19}{16}.$

Therefore, the minimizers $y$ must occur inside the ball $B_1$, away from the boundary. Furthermore, minimizers belong to the set $\{u<2\} \cap B_1$. And while minimizers may not be unique for each $x \in U$, the following criticality conditions are satisfied for any $x \in U$ with corresponding contact point $y$ not belonging to the null set $E$:

\begin{equation}
\label{eq: FOC1}
\nabla u(y) = 6(x-y),
\end{equation}
\begin{equation}
\label{eq: FOC2}
D^2u(y) \ge -6I
\end{equation}

Note that \eqref{eq: FOC1} completely determines the center $x$ in terms of the contact point $y$. 
Let $m$ be the map $m(y)=x$ determined by \eqref{eq: FOC1}, and let $\mathcal{T}$ be the set of values $y$ takes over $x \in U = m(\mathcal{T})$. It is standard to see that $\nabla u$ and $m$ are Lipschitz on $\mathcal{T}$, and we have the identity $Dm(y) = I + \frac16 D^2u(y)$. The goal is to estimate $\det(Dm(y)) = \det(I + \frac{1}{6}D^2u(y))$. Note that $Dm(y)$ is a nonnegative, symmetric matrix. 

To estimate $\det(Dm)$, we consider its eigenvalues. We first see that $D^2u(y)$ must have one non-positive eigenvalue, lest the supersolution inequality be violated. Indeed, if $D^2u$ were a strictly positive matrix, then $(D^2u(y))^-$ would vanish, and we would reach the nonsensical inequality
$$\lambda(y) \tr(D^2u(y)) \le 0.$$
In light of this fact, we see that according to \eqref{eq: FOC2}, the matrix $D^2u(y)$ has at least one eigenvalue between $-6$ and $0$, leading to the conclusion that $Dm(y)$ has at least one eigenvalue between $0$ and $1$. To estimate the remaining $d-1$ eigenvalues of $Dm(y)$, we again use the supersolution inequality to deduce that on $\mathcal{T}\setminus E$ there holds
$$\tr((D^2u(y))^+) \le \frac{\tr((D^2u(y))^-)}{\lambda(y)} \le \frac{6(d-1)}{\lambda(y)}.$$ 

This implies that the matrix $Dm(y)$ may be bounded by $|Dm(y)| \lesssim \lambda(y)^{-1}$. Combining these observations, we obtain that $\det(Dm(y)) \lesssim \lambda(y)^{1-d}$, for every $y\in \mathcal{T}\setminus E$. Now, define $\mu$ by the relation $|\{u \le 2\} \cap B_1| \coloneqq \mu $.  We will deduce a lower bound on $\mu$. Applying the change-of-variables formula, we obtain via H{\"o}lder's inequality the estimate

\begin{align}
(1-4^d\mu)|B_{1/4}| &\le |\{u>2\} \cap B_{1/4}|  = |U| = |m(\mathcal{T})| \le \int_{\mathcal{T}} |\det Dm(y)| \ dy\\
& \lesssim |\mathcal{T}|^{1-(d-1)/p} \Gamma(B_1)^{d-1} \lesssim \mu^{1-(d-1)/p} \Gamma(B_1)^{d-1},
\end{align} 
and the conclusion follows after solving for a lower bound on $\mu$. 

For a general lower-semicontinuous viscosity supersolution $u$, the conclusion follows by a simple $\Gamma$-convergence argument. See, for example, \cite{ImbertSilvestre2016}.
\end{proof}

We turn now to proving a doubling estimate. We adopt the idea of Smart and Armstrong in \cite{ArmstrongSmart2014} and touch from below by singular cusps of the form $|x|^{-\beta}$. The exponent $\beta$ will be connected to the average degeneracy coefficient $\Gamma(B_4)$. See also the paper \cite{Mooney2015}, in which a similar doubling lemma is proven by sliding this family of barriers.
\begin{lem}
\label{lem: doubling}
        Let $u$ be a nonnegative supersolution to
        $$\mathcal{P}^-_{\lambda(\cdot),1}(D^2u) \le 0 \text{ on } B_4,$$
        where the ellipticity $\lambda(\cdot)$ satisfies \eqref{eq: structuralassumption} and \eqref{eq: keyassumption} for some $p>d-1$. Then there exists a constant $M$, depending only on $p, d$, and $\Gamma(B)$, such that if 
        $$u \ge M \text{ on } B_1,$$
        then $u>1$ on $B_2$. The constant $M$ may be taken as $M = \exp(c \Gamma(B_4)^\kappa)$, where $\kappa \coloneqq \ds \frac{p}{p-(d-1)}$ and the constant $c$ is absolute.
\end{lem}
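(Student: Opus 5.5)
The plan is to argue by contradiction and adapt the cusp-touching argument of \cite{ArmstrongSmart2014} to the present integrability setting. The measure-theoretic step will mirror the proof of Lemma \ref{lem: weakharnack1}. So suppose $u \ge M$ on $B_1$ but $u(x_0) \le 1$ for some $x_0 \in B_2$.

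\textbf{Barrier family.} For a vertex $z \in B_{1/4}$ and an exponent $\beta>0$ to be chosen, set
$$\vphi_z(x) \coloneqq A\left(|x-z|^{-\beta} - (7/2)^{-\beta}\right), \qquad A \coloneqq 2\,(9/4)^{\beta}.$$
This barrier has the following properties on the annulus $\Omega_z \coloneqq B_4 \setminus \overline{B_{1/2}(z)}$:
\begin{itemize}
\item $\vphi_z \le 0 \le u$ on $\partial B_4$;
\item $\vphi_z(x_0) \ge 1$, since $|x_0 - z| \le 9/4$;
\item $\vphi_z \le A\,2^{\beta} = 2(9/2)^{\beta}$ on $\partial B_{1/2}(z) \subset B_1$.
\end{itemize}
Taking $M \coloneqq 2(9/2)^{\beta} = \exp(c\beta)$ gives $u \ge \vphi_z$ on $\partial\Omega_z$, while $u - \vphi_z$ is nonpositive at $x_0$. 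Shifting $\vphi_z$ downward and sliding it up until it touches, we obtain for every $z \in B_{1/4}$ an interior contact point $y = y(z) \in \Omega_z$ where $\vphi_z$ plus a constant touches $u$ from below. As in Lemma \ref{lem: weakharnack1}, we may first replace $u$ by its infimal convolution, so that $u$ is semiconcave and twice differentiable at almost every contact point.

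\textbf{Contact points lie where $\lambda$ is small.} The Hessian of $|x|^{-\beta}$ has radial eigenvalue $\beta(\beta+1)r^{-\beta-2}$ and $d-1$ tangential eigenvalues $-\beta r^{-\beta-2}$. Hence
$$\mathcal{P}^-_{\lambda(y),1}(D^2\vphi_z) = A\beta r^{-\beta-2}\left(\lambda(y)(\beta+1) - (d-1)\right).$$
This is strictly positive unless $\lambda(y)^{-1} \ge (\beta+1)/(d-1)$, so the supersolution property forces every contact point into the set
$$S_\beta \coloneqq \{\lambda^{-1} \ge (\beta+1)/(d-1)\}.$$
By Chebyshev's inequality, $|S_\beta| \lesssim \Gamma(B_4)^p \beta^{-p}$.

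\textbf{Measure argument.} At a contact point we have $\nabla u(y) = \nabla\vphi_z(y)$. Since $r \mapsto A\beta r^{-\beta-1}$ is monotone, this identity determines $z$ uniquely from $y$ and $\nabla u(y)$, giving a Lipschitz map $y \mapsto z$ from the contact set $\mathcal{T}$ onto $B_{1/4}$. Differentiating, $Dz = (D^2\vphi)^{-1}(D^2\vphi - D^2u)$ up to signs, with $D^2 u(y) \ge D^2\vphi_z(y)$. The supersolution inequality bounds $\tr (D^2u)^+$ by $\lambda^{-1}\tr (D^2u)^-$, and the tangential eigenvalues of $D^2\vphi$ control $(D^2u)^-$. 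As in Lemma \ref{lem: weakharnack1}, this should give $|\det Dz| \lesssim \lambda(y)^{1-d}$, up to factors depending on $\beta$ and $r \in (1/2, 4)$. Then Hölder's inequality yields
$$|B_{1/4}| \le \int_{\mathcal{T}} |\det Dz| \lesssim |S_\beta|^{1-\frac{d-1}{p}}\,\Gamma(B_4)^{d-1} \lesssim \Gamma(B_4)^{p}\,\beta^{-(p-(d-1))}.$$
This is a contradiction once $\beta = c\,\Gamma(B_4)^{p/(p-(d-1))} = c\,\Gamma(B_4)^\kappa$, which gives $M = \exp(c\Gamma(B_4)^\kappa)$ as claimed.

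\textbf{Main obstacle.} The delicate step is the Jacobian bound. One must check that the inverse radial and tangential curvatures of the cusp, which scale like $r^{\beta+2}/\beta$ and $r^{\beta+2}/(\beta(\beta+1))$, combine with the curvature of $u$ to produce only factors bounded independently of $\beta$ (or at worst $\beta$-powers absorbed into the choice of $c$), rather than $r^{O(\beta)}$ factors, which would spoil the exponent. To control this, I would first work in the eigenbasis of $D^2\vphi_z(y)$ and bound $\det\bigl(I - (D^2\vphi)^{-1}D^2u\bigr)$ using
\begin{itemize}
\item one eigenvalue of order $1$, coming from the radial direction;
\item $d-1$ eigenvalues of order $1 + \beta/\lambda(y)$;
\end{itemize}
and then note that the extra $\beta^{d-1}$ is harmless, since $\lambda^{-1} \gtrsim \beta$ on $S_\beta$.
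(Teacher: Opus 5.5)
\textbf{Verdict.} Your setup matches the paper's proof: cusps $|x-z|^{-\beta}$ slid from below, contact points forced into $\{\lambda\le (d-1)/(\beta+1)\}$, change of variables with Chebyshev and H\"older, $\beta\simeq\Gamma(B_4)^\kappa$, and $M=e^{c\beta}$. Your untruncated barrier on the annulus also works; note that $\vphi_z(x_0)>2-2(9/14)^\beta$, which exceeds $1$ once $\beta\ge 2$. There is, however, a genuine gap in the Jacobian step. You correctly flag it as the crux, but your proposed resolution is wrong.

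\textbf{Why the proposed resolution fails.} Your final display needs $|\det Dz|\lesssim\lambda(y)^{1-d}$ with a constant independent of $\beta$. A loss of $\beta^{d-1}$ is \emph{not} harmless. It turns the right-hand side into $\Gamma(B_4)^p\beta^{-(p-2(d-1))}$, which gives no contradiction when $d-1<p\le 2(d-1)$. Absorbing it via $\beta\lesssim\lambda^{-1}$ on $S_\beta$ instead produces $\int_{S_\beta}\lambda^{2(1-d)}$, which has the same defect. So your route would at best prove the lemma for $p>2(d-1)$.

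The eigenvalue picture ``one eigenvalue of order $1$ from the radial direction, $d-1$ of order $1+\beta/\lambda$'' also cannot be read off. The matrix $Dz=I-(D^2\vphi_z)^{-1}D^2u$ is neither symmetric nor diagonal in the eigenbasis of $D^2\vphi_z$. The factor $\beta/\lambda$ comes from the crude curvature ratio $\sigma_{\max}/\sigma_{\min}=\beta+1$ of the cusp. That crude bound is genuinely needed for the convex barrier in Lemma~\ref{lem: LMP1}, but not here.

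\textbf{The fix.} The missing observation is one you state but do not use. From $D^2u\ge D^2\vphi_z$, eigenvalue monotonicity gives $\tr (D^2u)^-\le\tr(D^2\vphi_z)^-=(d-1)A\beta r^{-\beta-2}$. The supersolution inequality then gives $|D^2u|\le(d-1)(1+\lambda^{-1})A\beta r^{-\beta-2}$. The norm $|(D^2\vphi_z)^{-1}|=r^{\beta+2}/(A\beta)$ is governed by the \emph{same} tangential curvature. Hence all factors of $A$, $\beta$ and $r$ cancel, and $|Dz|\le 1+(d-1)(1+\lambda^{-1})\lesssim\lambda^{-1}$. This also disposes of your worry about $r^{O(\beta)}$ factors.

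For the remaining direction, use a continuity argument rather than appealing to the radial direction. The smallest eigenvalue of $D^2u-tD^2\vphi_z$ is $\le 0$ at $t=0$, by the supersolution property, and $\ge 0$ at $t=1$, by contact. Hence $(D^2\vphi_z)^{-1}D^2u$ has an eigenvalue in $[0,1]$, and so does $Dz$. Therefore $\sigma_{\min}(Dz)\le 1$ and $|\det Dz|\le |Dz|^{d-1}\lesssim\lambda^{1-d}$. With this, your H\"older computation goes through verbatim and coincides with the paper's argument.
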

\begin{proof}
As in \cite{ArmstrongSmart2014}, the idea is that if at some point in $B_2\setminus B_1$, $u$ becomes too small relative to its lower bound on $B_1$, then $u$ may be touched from below by too many singular cusps with extremely large curvature ratio, which necessitates contact in the region where $\lambda$ is very small. The $L^p$ integrability assumption on $\lambda^{-1}$ is what will allow us to control the size of the contact set and reach a contradiction.

As is customary, we prove the lemma first under the assumption that $u$ is semi-concave. At the end, we explain how the inf-convolution approximation allows us to conclude the lemma for general supersolutions. As before, we remark that off of a set $E$ of measure zero, the function $u$ is first and second-differentiable.

Proceeding with the proof, we assume for the sake of contradiction that $u(x_0) \le 1$ for some $x_0 \in B_2$. Now, define the truncated cusps
$$\psi(x) \coloneqq C_1\begin{cases} 1 & \text{ on } B_{1/2} \\ \frac{|x|^{-\beta} - 3^{-\beta}}{1-3^{-\beta}} & \text{ on } \R^d \setminus B_{1/2}.
    \end{cases}
$$
where both $\beta>0$ is to be decided and $C_1 = C_1(\beta) \coloneqq \exp(c\beta)>0$ is chosen such that $\psi>2$ on $B_{5/2}$. The constant $c$ is absolute. We remark that $\psi<0$ on $\R^d \setminus \overline{B_3}$. 

Now fix an arbitrary center $x \in B_{1/2}$. Recenter and slide $\psi(x-y)$ from below until the graph touches the graph of $u$ at a contact point $y$. We argue that $y \in B_4 \setminus \overline{B_{1/2}(x)}.$  To that end, observe that if $w \in \partial B_4$, then the nonnegativity of $u$ and the above considerations imply that $u(w) - \psi(x-w)>0$. On the other hand, since $|x-x_0|<\ds \frac52$, there holds
$$u(x_0) - \psi(x_0-x) < 1-2<-1.$$

Therefore the contact point is not on $\partial B_4$. And since $u\ge M$ on $\overline{B_1} \supset  \overline{B_{1/2}(x)}$, the conclusion will follow by taking $M \coloneqq 2C_1(\beta)$. Note that we have still not yet chosen $\beta$. 

As in Lemma \ref{lem: weakharnack1}, we have the following first and second order conditions for each center $x\in B_{1/2}$ with corresponding contact point $y\notin E$:
\begin{equation}
\label{eq: FOC3}
\nabla u(y) = \nabla \psi(y-x),
\end{equation}
\begin{equation}
\label{eq: FOC4}
D^2u(y) \ge D^2 \psi(y-x).
\end{equation}
We now record that the Hessian of $|x|^{-\beta}$ diagonalizes as 
$$D^2|x|^{-\beta} \sim \beta |x|^{-2-\beta}\begin{cases} 1+ \beta & \text{ with multiplicity } 1,\\ -1 & \text{ with multiplicity } d-1.\end{cases}$$
At any contact point $y$, the supersolution inequality implies 

\begin{equation}
\label{eq: doublingbetaeq} \beta C_1 |x-y|^{-2-\beta} [(1+\beta) \lambda(y) - (d-1)] =\mathcal{P}^-_{\lambda(y),1}(D^2\psi(y-x)) \le 0.
\end{equation}
Note that \eqref{eq: FOC3} implies that the value of the contact point $y$ completely determines the center $x$. As a consequence, we can define (as before) a map $m$ from $y$ to $x$. As in the previous lemma, we define the contact set $\mathcal{T}$ be the values $y$ takes over all $x \in B_{1/2}$. Then, defining 
$$\mu  \coloneqq \ds \frac{d-1}{\beta +1},$$
we find that 
\begin{equation}
\label{eq: contactsmallellipticity} \mathcal{T} \subset \{y \in B_4 \ : \ \lambda(y) \le \mu\}.
\end{equation}
As in \cite{ArmstrongSmart2014}, the semi-concavity of $u$ and boundedness of $D^2\psi$ on $\mathcal{T}$ imply that $u$ is in fact $C^{1,1}$ on $\mathcal{T}$. Here we have used that contact points occur at a distance of at least $1/2$ from the vertices. This justifies the following differentiations.

Proceeding similarly to Lemma \ref{lem: weakharnack1}, we differentiate \eqref{eq: FOC3} with respect to $y$ to find 
$$D^2u(y) = D^2\psi(y-m(y))(I- Dm(y)) \text{ for all } y \in \mathcal{T} \setminus E$$
and consequently, now suppressing the arguments with the understanding that $y \in \mathcal{T}\setminus E$,
\begin{equation}
\label{eq: meq}
Dm = I - (D^2\psi)^{-1} D^2u
\end{equation}

We claim that $Dm$ has at least one eigenvalue between $0$ and $1$, although now we have to take extra care since $\psi$ is not concave. The claim will follow if we show that $(D^2\psi)^{-1}D^2u$ has an eigenvalue between $0$ and $1$. To that end, for each $t \in [0,1]$ let $\mu(t)$ denote the minimum eigenvalue of the symmetric matrix $D^2u - t D^2 \psi$. Arguing as in Lemma \ref{lem: weakharnack1}, we have that $\mu(0) \le 0$ because $u$ is a supersolution, and we have as well  that $\mu(1) \ge 0$ thanks to \eqref{eq: FOC4}. The map $\mu$ is clearly continuous, so we may conclude that $(D^2\psi)^{-1}D^2u$ has an eigenvalue between $0$ and $1$. Therefore, $Dm$ does as well. 

Furthermore, the contact inequality \eqref{eq: FOC4} may be combined with the assumption $\mathcal{P}^-_{\lambda(\cdot),1}(D^2u)\le 0$ to derive
$$\tr(D^2u(y)^+) \le \lambda(y)^{-1} (d-1)\beta C_1 |y-m(y)|^{-2-\beta} \text{ for all } y \in \mathcal{T}\setminus E$$ 
and, since $\lambda <1$, we arrive at
$$|D^2u(y)| \le 2\lambda(y)^{-1} (d-1)\beta C_1 |y-m(y)|^{-2-\beta} \text{ for all } y \in \mathcal{T}\setminus E.$$
On the other hand, for all $y$ there holds
$$|(D^2\psi)^{-1}(y-m(y))| \le (\beta C_1)^{-1} |y-m(y)|^{2+\beta}$$
from which we derive that for every $y \in \mathcal{T}\setminus E$ the estimate
\begin{equation}
    \label{eq: meq2}
    |Dm(y)| \lesssim \lambda(y)^{-1}.
\end{equation}
The matrix $Dm(y)$ may not be symmetric or diagonalizable, but the fact that it has at least one eigenvalue between $0$ and $1$ implies that it has at least one singular value between $0$ and $1$. The remaining $d-1$ singular values may bounded above using \eqref{eq: meq2}, allowing us to deduce for all $y \in \mathcal{T}\setminus E$ the determinant bound
$$|\det(Dm(y))| \lesssim \lambda(y)^{1-d}.$$

Now, since $\nabla u$ and $(\nabla \psi)^{-1}$ are Lipschitz on $\mathcal{T}$, so is $m$, and therefore it is legitimate to apply the change of variables formula to obtain
\begin{align}
|B_{1/2}| &= |m(\mathcal{T})| \le \int_\mathcal{T} |\det(Dm(y))| \ dy \lesssim \int_{\{\lambda <\mu\} \cap B_4} \lambda^{1-d}\\
&\lesssim | \{\lambda <\mu\} \cap B_4|^{1-(d-1)/p} \|\lambda^{-1}\|_{L^p(B_4)}^{d-1} \lesssim \mu^{p-(d-1)}\Gamma(B_4)^{p}.
\end{align}
Here we have used H{\"o}lder's and Chebyshev's inequalities, as well as the inclusion \eqref{eq: contactsmallellipticity}. We reach a contradiction if we let $\ds \mu =\eta \Gamma(B_4)^{-\frac{p}{p-(d-1)}}$ for some small constant $\eta$ depending only on dimension. Recalling that $\mu \simeq \beta^{-1}$ and $M = 2C_1 \simeq \exp(c\beta)$ we conclude the proof in the case that $u$ is a general semi-concave supersolution. 

For a general lower-semicontinuous supersolution $u$ satisfying the assumptions of the lemma, we remark that the inf-convolutions 
$$u_\eps(x) \coloneqq \inf_{y  \in B_4} u(y) + \frac{1}{2\eps}|x-y|^2 \ge M \text{ on } B_{1-\eta}$$
for $\eta \coloneqq \sqrt{2M\eps}.$

To see this, note that if $x \in B_{1-\eta}$ and $y \in B_4 \setminus B_1$, then 
$$u_\eps(x) \ge u(y) + \frac{1}{2\eps}|x-y|^2 \ge \frac{\eta^2}{2\eps}\ge M.$$

Applying the lemma in the already-proven case of semi-concave supersolutions, we conclude that $u_\eps >1$ on $B_{2-2\eta}$, up to multiplying $M$ by an absolute constant. Sending $\eps \to 0$ we deduce that $u>1$ on $B_2$ as claimed. 
\end{proof}

Combining a scaled version of the previous two estimates, we arrive at the following same-ball estimate. 
\begin{cor}
\label{cor: sameball}
 Let $u$ be a nonnegative supersolution to
        $$\mathcal{P}^-_{\lambda(\cdot),1}(D^2u) \le 0 \text{ on } B_1,$$
        where the ellipticity $\lambda(\cdot)$ satisfies \eqref{eq: structuralassumption} and \eqref{eq: keyassumption} for some $p>d-1$. Let $B = B_r(x_0)$ be any ball such that $2B \coloneqq B_{2r}(x_0) \subset B_1$, and suppose that $\ds \inf_B u \le 1$. Then there holds
$$|\{u \le \exp(c_1\Gamma(2B)^\kappa) \} \cap B| \ge c_2\Gamma(2B)^{(1-d)/(1-\alpha)} |B|$$
where we have defined $\alpha \ds \coloneqq   \frac{d-1}{p} \in (0,1)$. The constants $c_1$ and $c_2$ are absolute.
\end{cor}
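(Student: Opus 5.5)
Since $\inf_B u\le 1$, we first use the doubling estimate Lemma \ref{lem: doubling} to locate a point of $B$ well inside the ball where $u$ is not too large. We then apply the measure-to-pointwise estimate Lemma \ref{lem: weakharnack1} on a ball around that point contained in $B$. Both lemmas are stated at unit scale. We use them on rescaled and translated balls, writing $\tilde u(z)=u(x_0+\rho z)$ for the appropriate radius $\rho$. Rescaling preserves the Pucci inequality with ellipticity $\lambda(x_0+\rho\,\cdot)$, and the degeneracy coefficient transforms exactly as in the introduction, so $\Gamma$ of the unit ball becomes $\Gamma$ of the corresponding ball in the original picture. Every ball we use will lie inside $2B$ and have radius comparable to $r$, so its $\Gamma$ is at most a dimensional multiple of $\Gamma(2B)$. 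Indeed, for $B'\subset 2B$ with $|B'|\gtrsim |2B|$ we have $\Gamma(B')^p\le \frac{|2B|}{|B'|}\Gamma(2B)^p$.

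\textbf{Step 1 (doubling).} Let $y\in \overline B$ satisfy $u(y)\le 1+\delta$; by lower semicontinuity we may take $u(y)\le 1$. Apply Lemma \ref{lem: doubling} to $\tilde u(z)=u(x_0+\tfrac{r}{4}z)$ on $B_4$, so that the lemma's $B_4$ corresponds to $B$ itself, its $B_2$ corresponds to $B_{r/2}(x_0)$, and its $B_1$ to $B_{r/4}(x_0)$. The point $y$ may lie near $\partial B$ rather than in $B_{r/2}(x_0)$. To handle this, we either center the lemma at a point on the segment from $x_0$ to $y$, or chain the lemma a bounded number of times along that segment, using balls of radius $\simeq r/8$ inside $2B$. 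Each application gives the contrapositive: if $u$ is $\le 1$ somewhere in the doubled ball, then $u<M=\exp(c\Gamma^\kappa)$ somewhere in the inner ball. A chain of $N(d)$ steps yields a point $z_1\in B_{r/4}(x_0)$ with $u(z_1)\le M^{N}=\exp(c_1\Gamma(2B)^\kappa)/2$. To avoid losing this power, it is cleaner to use a single application of the lemma, centered at $x_0$ with radius $r/2$ (so that its $B_4$ is $B_{2r}(x_0)=2B$ and its $B_2$ is $B$). Then $u\le 1$ at some point of $B$ forces $u<M$ somewhere on $B_{r/2}(x_0)$, with no chaining.

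\textbf{Step 2 (measure).} Apply Lemma \ref{lem: weakharnack1} to $v(z)=u(z_1+\tfrac r2 z)/M$ on $B_1$. This is legitimate because $B_{r/2}(z_1)\subset B$. The lemma gives $|\{u\le 2M\}\cap B_{r/2}(z_1)|\ge c\,\Gamma(B_{r/2}(z_1))^{-\frac{p(d-1)}{p-(d-1)}}|B_{r/2}|$. Observe that $\frac{p(d-1)}{p-(d-1)}=\frac{d-1}{1-\alpha}$ with $\alpha=(d-1)/p$. Finally, bound $\Gamma(B_{r/2}(z_1))\lesssim\Gamma(2B)$ by the volume comparison above, absorb the dimensional factors into $c_2$, and absorb the factor $2$ into $c_1$; this requires $\Gamma\ge 1$, which holds because $\lambda\le 1$.

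The main obstacle is the geometric bookkeeping in Step 1: ensuring that the doubling lemma is invoked on a ball that fits inside $2B$ while its inner ball is deep enough inside $B$ for Step 2, and doing this without chaining, which would raise the constant $M$ to a power. Centering at $x_0$ with radius $r/2$, as above, resolves this.
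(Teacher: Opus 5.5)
Your proposal is correct and is exactly the argument the paper has in mind when it says the corollary follows by combining scaled versions of Lemma \ref{lem: doubling} and Lemma \ref{lem: weakharnack1}. You apply the first on $2B$ at scale $r/2$, producing a point $z_1\in B_{r/2}(x_0)$ with $u(z_1)<\exp(c\Gamma(2B)^\kappa)$, and the second on $B_{r/2}(z_1)\subset B$, using $\Gamma(B_{r/2}(z_1))\le 4^{d/p}\Gamma(2B)$ and $\Gamma\ge 1$. The chaining detour is unnecessary, and it would only multiply $c_1$ by a dimensional factor anyway; a minimizer $y\in\partial B$ is handled by taking a point of $B$ with $u\le 1+\delta$ and renormalizing.
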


At long last, we may derive a $\log-L^\eps$ estimate for supersolutions. As discussed in the introduction, the proof is considerably more difficult than in the uniformly elliptic case. This is because the above corollary is extremely weak. The problem posed by non-uniform ellipticity is two-fold: if $u$ is small at some point $x \in B$, then $u$ inherits an upper bound which grows exponentially with $\Gamma(B)$ on a portion of $B$ that decays polynomially with respect to the reciprocal of $\Gamma(B)$. This dual weakness makes any kind of ink-spots argument very delicate. At times we take extra care to keep track of constants depending on $\Gamma(B_5)$ for the purposes of large-scale regularity results. 

\begin{proof}[Proof of Proposition \ref{prop: logL^eps}]
The estimate in the statement of the proposition is homogeneous, so we may assume without loss of generality that $\inf_{B_1}u=1$. Take $t_0=1$, and then for $k \ge 1$ consider an increasing, unbounded sequence $\{t_k\}$ to be decided. For each $t_k$, we define the super-level set $A_k$ 
$$
A_k \coloneqq \{u>t_k\} \cap B_1,$$
which is open due to the lower-semicontinuity of $u$. For each $x \in A_k$, let $r = r(x)>0$ be the first radius such that $\partial B_{r}(x) \cap \{u\le t_k\} \cap B_1 \neq \emptyset$. Since $\inf_{B_1}u=1$ and $t_k \ge 1$, it is guaranteed that $r(x) \le 2$. Now we define the balls $\widetilde{B}_x \coloneqq B_{2r(x)}(x)$. Necessarily, $\widetilde{B}_x \subset B_5$ for each $x \in A_k$, and the balls $\{\widetilde{B}_x\}_{x \in A_k}$ clearly form a covering of $A_k$. Therefore, we may apply the Besicovitch covering theorem to reduce our covering of $A_k$ to $n(d)$ subcollections $\mathcal{C}_i$, $1\le i \le n(d)$, where each $\mathcal{C}_i$ is comprised of countably many disjoint balls $\widetilde{B}$. We refer to the balls $\widetilde{B}$ as \textit{outer balls}, and drop the subscript denoting the center $x$ wherever convenient. 

For each outer ball $\widetilde B$ in the Besicovitch subcovering, we sort $\widetilde B \in \mathcal{G}_k$ provided
$$
\Gamma(\widetilde B) \le N_k,
$$
for $N_k$ to be chosen. The sub-collection $\mathcal{G}_k$ refers to the \textit{good} outer balls, on which the ellipticity is controlled by a parameter of our choosing. We call the remaining balls \textit{bad}, and sort them into the sub-collection $\mathcal{B}_k$. We can control the size of the bad set by noting that for each of the $n(d)$ subcovers $\mathcal{C}_i$, we have
$$
N_k^p \left|\bigcup_{\widetilde B \in \mathcal{C}_i \cap \mathcal{B}_k}\widetilde B\right|
=
N_k^p \sum_{\widetilde B \in \mathcal{C}_i \cap \mathcal{B}_k} |\widetilde B|
\le
\sum_{\widetilde B \in \mathcal{C}_i \cap \mathcal{B}_k} \int_{\widetilde B} \lambda^{-p}
\le
\|\lambda^{-1}\|_{L^p(B_5)}^p,
$$
just by summing over disjoint outer balls.

To avoid technicalities in what follows, we define
$$
K_0 \coloneqq \sup\{k>0 \ : \ A_k \neq \emptyset\}.
$$
The following analysis holds for all $0 \le k \le K_0-1 \le +\infty$, and from now on it is assumed that $k$ falls in this range (which could be infinite). With this in mind, and using that there are at most $n(d)$ subcollections $\mathcal{C}_i$, we define
$$
N_k = c(d)\,\Gamma(B_5)\,|A_k|^{-1/p},
$$
where $c(d)$ is chosen to ensure that at least half of the set $A_k$ is covered by \textit{good} outer balls. In other words, with this choice of $N_k$ we ensure that
$$
\left|\bigcup_{i=1}^{n(d)} \bigcup_{\widetilde B \in \mathcal{C}_i \cap \mathcal{B}_k}\widetilde B\right|
<
\frac12|A_k|.
$$
As a consequence, we may apply the pigeonhole principle to select a subcover $\mathcal{C}_{i_*}$ such that
\begin{equation}
\label{eq: subcoverselection}
\left|\bigcup_{\widetilde B \in \mathcal{C}_{i_*} \cap \mathcal{G}_k}\widetilde B\right|
\ge
\frac{1}{2n(d)}|A_k|.
\end{equation}

Now, we need to choose carefully particular sub-balls of our covering. We claim that for each ball $\widetilde{B}$ of our covering, there exists a sub-ball $B \subset \widetilde{B} \cap B_1$ such that $|B| =4^{-d} |\widetilde{B}|$ and $\partial B \cap \{u \le t_k\} \cap B_1 \neq \emptyset$. To see this, fix an $x\in A_k$ and consider the ball $B_{2r(x)}(x) = \widetilde{B}_x = \widetilde{B}$, and let $z_x \in \partial B_{r(x)}(x) \cap \{u\le t_k\} \cap B_1$. The existence of $z_x$ is guaranteed by the definition of $r(x)$. Finally, let $y_x$ denote the midpoint between $x$  and $z_x$. The claim follows by taking $B \coloneqq B_{r(x)/2}(y_x)$, and from now on the notation $B$ always refers to such a ball associated to each ball $\widetilde{B}$ in the covering. See Figure \ref{fig:covering} for an illustration of this construction. Then, since the outer balls $\widetilde{B}$ in $\mathcal{C}_{i_*}$ are disjoint, their associated \textit{inner balls} $B$ are also disjoint, and we have
\begin{equation}
\label{eq: innerselection}
\left|\bigcup_{\widetilde B \in \mathcal{C}_{i_*} \cap \mathcal{G}_k}B\right|
=
4^{-d}
\left|\bigcup_{\widetilde B \in \mathcal{C}_{i_*} \cap \mathcal{G}_k}\widetilde B\right|
\ge
\frac{1}{2^{2d+1}n(d)}|A_k|.
\end{equation}

\begin{figure}[t]
    \centering
    \begin{tikzpicture}[scale=2.2]

\coordinate (O) at (0,0);


\def\blobpath{
    (-0.95, 0.10)
    .. controls (-0.92, 0.55) and (-0.80, 0.78) ..
    (-0.50, 0.85)
    .. controls (-0.20, 0.90) and (0.05, 0.78) ..
    (0.18, 0.58)
    .. controls (0.28, 0.42) and (0.42, 0.30) ..
    (0.55, 0.22)
    .. controls (0.72, 0.12) and (0.82, -0.05) ..
    (0.78, -0.25)
    .. controls (0.72, -0.48) and (0.50, -0.60) ..
    (0.25, -0.65)
    .. controls (0.00, -0.72) and (-0.35, -0.78) ..
    (-0.60, -0.82)
    .. controls (-0.82, -0.85) and (-0.96, -0.65) ..
    (-0.98, -0.40)
    .. controls (-1.00, -0.15) and (-0.97, 0.00) ..
    (-0.95, 0.10)
}

\begin{scope}
    \clip (O) circle (1);
    \fill[red!18] \blobpath -- cycle;
\end{scope}

\draw[thick] (O) circle (1);

\begin{scope}
    \clip (O) circle (0.98);  

    \draw[red!40!black, line width=0.4pt]
        \blobpath -- cycle;

    \draw[thick, red!40!black,
          decoration={random steps, segment length=3pt, amplitude=0.8pt},
          decorate]
        \blobpath -- cycle;
\end{scope}


\node[font=\large] at (-0.52, 0.45) {$A_k$};

\node[font=\large] at (0.45, 0.65) {$B_1 \!\setminus\! A_k$};

\node[font=\normalsize, anchor=north west] at ($(O)+(320:1.05)$) {$B_1$};


\coordinate (x) at (-0.20, -0.02);

\coordinate (zx) at (0.42, 0.32);

\coordinate (yx) at ($(x)!0.5!(zx)$);

\pgfmathsetmacro{\halflen}{sqrt((0.11-0.40)^2 + (0.15-0.30)^2)}
\draw[blue!70, thick] (yx) circle (\halflen);

\node[blue!70, font=\normalsize] at ($(yx)+(0.03,-0.15)$) {$B$};

\fill (x) circle (1.3pt);
\node[anchor=north east, font=\normalsize] at ($(x)+(-0.04,-0.02)$) {$x$};

\fill (zx) circle (1.3pt);
\node[anchor=south west, font=\normalsize] at ($(zx)+(0.02,0.02)$) {$z_x$};

\fill (yx) circle (1.3pt);
\node[anchor=south east, font=\normalsize] at ($(yx)+(0,-0.04)$) {$y_x$};

\end{tikzpicture}
    \caption{The construction of the inner balls $B$. Note that the outer ball $\widetilde{B}$ would extend far outside of $B_1$.}
    \label{fig:covering}
\end{figure}
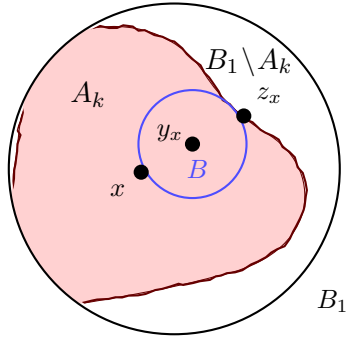

Before continuing with the proof, for the reader's convenience we now recall the definitions of the exponents $\alpha$ and $\kappa$ and define two new exponents $s$ and $\sigma$, as follows:
\begin{align}
    \kappa &\coloneqq \frac{p}{p-(d-1)}, \label{eq: kappadef}
\\
    \alpha & \coloneqq \frac{p}{d-1}, \label{eq: alphadef}\\
    s &\coloneqq \frac{\alpha}{1-\alpha} = \frac{d-1}{p-(d-1)}, \label{eq: sdef}\\
    \sigma &\coloneqq \frac{s}{d-1} = \frac{\kappa}{p} \label{eq: sigmadef}.
\end{align}
At this point it is also convenient to define
$$a_k \coloneqq |A_k|.$$

Now, fix any ball $\widetilde{B} \in \mathcal{G}_k$, and consider its associated inner ball $B$. Owing to the lower-semicontinuity of $u$ and construction of the inner balls $B$, we find that $\ds \inf_{B} u \le t_k$ and, since $2B \subset \widetilde{B}$ is of comparable volume, $\Gamma(2B)\lesssim N_k$. Applying Corollary \ref{cor: sameball}, we obtain
\begin{align} 
\label{eq: firstmeasureestimatelogLeps}
|\{u \le t_k \exp(c_1N_k^\kappa))\} \cap B| & \ge |\{u \le t_k \exp(c_1 \Gamma(2B))\}\cap B|\\
&\ge c_2 \Gamma(2B)^{(1-d)/(1-\alpha)}|B| \ge c_2 N_k^{(1-d)/(1-\alpha)}|B|,
\end{align}
where we have allowed the absolute constants $c_1$ and $c_2$ to pick up some factors only depending on $d$ and $p$, and will continue to do so throughout the proof. Then, using that
$$
N_k \simeq \Gamma(B_5) a_k^{-1/p},
$$
the inequality \eqref{eq: firstmeasureestimatelogLeps} rewrites, after a little bit of exponent algebra, as
$$
|\{u \le t_k \exp(\tilde{c}_1 a_k^{-\sigma})\}\cap B|
\ge
\tilde{c}_2 a_k^s |B|,
$$
where
$$
\tilde{c}_1 \coloneqq c_1 \Gamma(B_5)^\kappa
\qquad\text{and}\qquad
\tilde{c}_2 \coloneqq c_2 \Gamma(B_5)^{(1-d)/(1-\alpha)},$$
and we recall the definitions of the exponents in \eqref{eq: kappadef}, \eqref{eq: alphadef}, \eqref{eq: sdef}, and \eqref{eq: sigmadef}.

This leads us to choose
$$
t_{k+1} \coloneqq t_k \exp(\tilde{c}_1 a_k^{-\sigma}).
$$
Recalling the construction of the balls $B$, we estimate
\begin{align}
a_{k+1}
&= |A_{k+1}| = |\{u> t_{k+1}\}\cap B_1| \\
&\le |\{u>t_k\} \cap B_1|
-
\sum_{\widetilde B \in \mathcal{C}_{i_*}\cap \mathcal{G}_k}
|\{u \le t_{k+1} \} \cap B| \\
&\le |A_k|
-
c_2N_k^{(1-d)/(1-\alpha)}
\left|\bigcup_{\widetilde B \in \mathcal{C}_{i_*}\cap \mathcal{G}_k} B\right| \\
&\le a_k(1-\tilde{c}_2a_k^s),
\end{align}
where we have used \eqref{eq: innerselection}, which implies that an absolute portion of $A_k$ comes from disjoint inner balls associated with good outer balls. We have also used that the balls $B$ corresponding to $\tilde{B} \in \mathcal{C}_{i_*} \cap \mathcal{G}_k$ are disjoint. Of course, we allow the value of the constant $\tilde{c}_2$ to absorb absolute constants.

Now it remains to analyze the coupled system
\begin{align}
\label{eq: adecay}
a_{k+1} &\le a_k(1-\tilde{c}_2a_k^s)\\
\label{eq: tgrowth}
\log(t_{k+1}/t_k) &= \tilde{c}_1a_k^{-\sigma}.
\end{align}

First, we note that $a_k$ indeed tends to zero, with at least polynomial decay, which may be determined from \eqref{eq: adecay} as follows. Setting $b_k \coloneqq a_k^{-s}$ we may apply Bernoulli's inequality, which says that $(1-x)^{-s} \ge 1+sx$ for $x$ small, to find
    \begin{align}b_{k+1} &\ge (1-\tilde{c}_2a_k^s)^{-s}b_k \ge (1+s\tilde{c}_2a_k^s)b_k \\
    &= b_k + s\tilde{c}_2 \ge b_0 + (k+1)s\tilde{c}_2.\\
    \end{align}
Accordingly, 
 \begin{equation}
 \label{eq: adecay2} a_{k+1} \le \frac{1}{(a_0^{-s} + ks\tilde{c}_2)^{1/s}} \lesssim_{\tilde{c}_2,s} k^{-1/s}.
 \end{equation}
Before proceeding, it is convenient to define $w \coloneqq \log u$ and, naturally, $w_k \coloneqq \log t_k$. Then \eqref{eq: tgrowth} becomes
\begin{equation}
\label{eq: tgrowth2}
\Delta w_k \coloneqq w_{k+1} - w_k = \tilde{c}_1a_k^{-\sigma}.
\end{equation}

We claim now that in fact $w_{k} \lesssim a_k^{-s-\sigma}$. This arises naturally from the coupled system. Indeed, \eqref{eq: adecay} implies that $\Delta a_k \lesssim - a_k^{1+s}$, which, combined with \eqref{eq: tgrowth2}, indicates the heuristic $\ds \frac{\Delta w}{-\Delta a} \lesssim a^{-1-\sigma -s}$. To see this more rigorously, apply mean value theorem again to deduce that
$$a_{k+1}^{-s-\sigma} - a_k^{-s-\sigma} = (s+\sigma) \eta_k^{-s-\sigma -1}(a_k-a_{k+1})$$
for some $\eta_k \in [a_{k+1}, a_k]$. Using that $\eta_k \le a_k$, \eqref{eq: adecay}, and \eqref{eq: tgrowth2}, we get
\begin{align}
a_{k+1}^{-s-\sigma} - a_k^{-s-\sigma } &\ge \tilde{c}_2 (s+\sigma)a_k^{-s-\sigma -1} a_k^{1+s}\\
&= \tilde{c}_2 a_k^{-\sigma} = \tilde{c}_2 \tilde{c}_1^{-1}(w_{k+1}-w_k).
\end{align}
Again $\tilde{c}_2$ has absorbed absolute constants. We remark now that $\tilde{c}_2\tilde{c}_1^{-1} \simeq \Gamma(B_5)^{-\kappa + (1-d)/(1-\alpha)} = \Gamma(B_5)^{-\kappa d}$. 
Telescoping, we deduce that 
$$w_{k+1} \lesssim \Gamma(B_5)^{\kappa d}(a_{k+1}^{-s-\sigma} + w_0 - a_0^{-s-\sigma}) \lesssim \Gamma(B_5)^{\kappa d} a_{k+1}^{-s-\sigma}.$$ This was the claimed inequality. Finally, since $a_k \to 0$, the bound $a_k^{-\sigma -s}$ dominates $a_k^{-\sigma} \simeq \Delta w_k$, and we also have\begin{equation}
\label{eq: finalweq}
w_{k+1} = w_k + \Delta w_k \lesssim \Gamma(B_5)^{\kappa d} a_k^{-s-\sigma}.
\end{equation}
Now, for $\ds \eps < \frac{1}{\sigma +s}$, we may estimate
\begin{align}
    \int_{B_1} \log(u)^\eps  &= \int_{B_1} w^\eps = \sum_{k=0}^{K_0}\int_{ \{w_k \le w < w_{k+1}\}\cap B_1} w^\eps  \\
    &\le \sum_{k=0}^{K_0}w_{k+1}^\eps (a_k - a_{k+1})  \lesssim \Gamma(B_5)^{\kappa d \eps}\sum_{k=0}^{K_0} \int_{a_{k+1}}^{a_k} a^{\eps(-s-\sigma)} \ da \\
    & \le \Gamma(B_5)^{\kappa d \eps} \int_0^{a_0} a^{\eps(-s-\sigma)} \ da \lesssim \Gamma(B_5)^{\kappa d \eps}.
\end{align}
where the second line follows from \eqref{eq: finalweq} and the observation that $a^{\eps(-s-\sigma)} \ge a_k^{\eps(-s-\sigma)}$ on the interval $[a_{k+1}, a_k]$. This concludes the proof. 
\end{proof}
As a consequence, we derive a unit-scale improvement of oscillation for solutions to \eqref{eq: degen}, under the assumption that $p>d-1$. 
\begin{proof}[Proof of Proposition \ref{prop: IOL}]
Let $u$ be as in the statement of Proposition \ref{prop: IOL}. Consider the function $v$ by 
$$v \coloneqq \frac{u - \inf_{B_5} u}{\osc_{B_5}u}.$$
Certainly $v$ satisfies $\mathcal{P}^-_{\lambda(\cdot),1}(D^2v)\le 0 \le\mathcal{P}^+_{\lambda(\cdot),1}(D^2v)\le 0$ inside $B_5$. Furthermore, one sees that $\ds \osc_{B_5} v =1$ and $0\le v \le 1$. Now, without loss of generality, we may assume that 
$$|\{v\ge \ds \frac12\} \cap B_1| \ge \frac12 |B_1|$$
for otherwise we could apply the same argument to $w \coloneqq \ds \frac{\sup_{B_5} u - u}{\osc_{B_5} u}.$ Defining $m\ds \coloneqq \inf_{B_1}v$ and noticing that the maximum principle implies that either $m>0$ unless $u$ is constant, we apply Proposition \ref{prop: logL^eps} to $\ds \frac{v}{m}$ to deduce
$$\int_{B_1} \log\left(\frac{v}{m}\right)^\eps \le C(p,d,\eps) \Gamma(B_5)^{\kappa d \eps}.$$
Under the assumption on $v$, we find
$$\frac{|B_1|}{2} \log\left(\frac{1}{2m}\right) ^\eps\le C(p,d,\eps) \Gamma(B_5)^{\kappa d \eps}.$$

As a consequence,
$$m \ge \frac12 \exp\left(-C(d,p,\eps)\Gamma(B_5)^{\kappa d}\right) \coloneqq \theta.$$
Since $0\le v \le 1$, we deduce that $\osc_{B_1}v \le 1-\theta$, and hence
$$\osc_{B_1}u = \sup_{B_1} u - \inf_{B_1} u \le \sup_{B_1} u - \inf_{B_5} u \le (1-\theta) \osc_{B_5}u.$$
\end{proof}
With the above in hand, we include a proof of the Liouville theorem.
\begin{proof}[Proof of Theorem \ref{thm: Liouville}]
After subtracting off a constant, we may assume $\ds \inf_{\R^d} u =0$. For $R>0$ large, define $\theta_R \coloneqq \exp(-c(d,p)\Gamma(B_{5R})^{\kappa d})$, where $c(d,p)$ is as in the above proposition. Applying said proposition to the rescalings $u_R(\cdot) \coloneqq u(R  \cdot)$, we find
$$\osc_{B_R} u \le (1-\theta_R) \osc_{B_{5R}}u.$$
As $R \to \infty$, both $\ds \osc_{B_R}u\to \sup_{\R^d} u$ and $\ds \osc_{B_{5R}}u \to \sup_{\R^d} u$. Meanwhile, the assumption of the theorem guarantees that $\ds \liminf_{R\to \infty} \theta_R \ge \theta_\infty>0$ for some positive number $\theta_\infty$. We thus arrive at 
$$\sup_{\R^d} u \le (1-\theta_\infty) \sup_{\R^d}u$$
which of course implies $u\equiv 0$. 
\end{proof}
\section{A Logarithmic Local Maximum Principle}
\label{sec: LMP}
In this section, we present a new local maximum principle, which holds under the assumption that $\lambda$ satisfies \eqref{eq: keyassumption} for $p>d-1$. The main lemma involves touching a subsolution from above by double exponential blow-up barriers, which is seemingly optimal for geometric reasons that we will discuss after the proof. 

\begin{lem}
\label{lem: LMP1}
Let $u:B_2 \to \R$ be a nonnegative viscosity subsolution to 
$$\mathcal{P}^+_{\lambda(\cdot),1}(D^2u)  \ge 0  \text{ in } B_2$$
such that $u(0)=2$. Assume that the ellipticity $\lambda(\cdot)$ satisfies \eqref{eq: structuralassumption} and \eqref{eq: keyassumption} for some $p>d-1$. Then for any $q>d-1$, there holds
$$1 \le C(d,p,q) \Gamma(B_2)^{d-1} \left(\int_{ \{u>1\}\cap B_2} (\log u)^{q\kappa}\right)^{1/\kappa},$$
where we have defined $\kappa$ as in Proposition \ref{prop: logL^eps}.
\end{lem}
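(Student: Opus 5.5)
The plan is to touch $u$ from above by a family of translated, radially symmetric barriers that blow up on the boundary of their unit ball. Push the resulting contact points forward to the vertices by a map determined by the first-order condition. Then compare $|B_\delta|$, the measure of the vertex set, with $\int_{\mathcal T}|\det Dm|$, as in Lemmas \ref{lem: weakharnack1} and \ref{lem: doubling}. As usual, I first assume $u$ is semi-convex (via sup-convolution), so that it is twice differentiable off a null set $E$, and pass to the limit at the end.

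\textbf{The barrier.} Set $\Phi(z)\coloneqq \exp\exp\big((1-|z|^2)^{-1}\big)$ on $B_1$. It is radial, convex and increasing in $|z|$, minimized at $0$, and $+\infty$ on $\partial B_1$. For a vertex $x\in B_\delta$, with $\delta$ small and absolute, let $h(x)$ be the least $h$ such that $h\Phi(\cdot-x)\ge u$ on $B_1(x)\subset B_2$. Since the barrier blows up on $\partial B_1(x)$, contact happens at an interior point $y$. Since $u(0)=2$ and $\Phi(-x)\le 2\Phi(0)$ for $|x|<\delta$, we get $h\ge 1/\Phi(0)$. Hence
$$u(y)=h\Phi(y-x)\ge h\Phi(0)\ge 1,$$
and the contact set $\mathcal T$ lies in $\{u\ge 1\}\cap B_2$. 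Moreover $\log(u(y)/h)=\log\Phi(y-x)$, and $\log(1/h)\lesssim 1$, so $\log\Phi(y-x)\lesssim \log u(y)$ on $\mathcal T$. (One may shrink the barrier slightly to get strict inequality $u>1$.)

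\textbf{The map and its Jacobian.} At $y\in\mathcal T\setminus E$ the first-order condition reads
$$\nabla\log u(y)=\nabla\log\Phi(y-x)=G(|y-x|)\frac{y-x}{|y-x|},\qquad G(r)=\frac{2r}{(1-r^2)^{2}}\,e^{1/(1-r^2)}.$$
Since $G$ is strictly increasing, this determines $x=m(y)$. Differentiating gives
$$Dm=I-(D^2\log\Phi)^{-1}D^2\log u .$$
Note that $D^2\log\Phi$ is positive definite. Its radial eigenvalue is $\sim L(1-r)^{-4}$ and its tangential eigenvalues are $G/r\sim L(1-r)^{-2}$, where $L\coloneqq\log\Phi=e^{1/(1-r^2)}$.

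The second-order condition $D^2u\le D^2(h\Phi)$, divided by $u=h\Phi$, gives an upper bound on $D^2u/u$ of size $|\nabla\log\Phi|^2+|D^2\log\Phi|\lesssim L^2(1-r)^{-4}$. The subsolution inequality $\tr(D^2u)^+\ge\lambda\tr(D^2u)^-$ then bounds the negative part by $\lambda^{-1}L^2(1-r)^{-4}$. By the same continuity argument as in Lemma \ref{lem: doubling}, applied to $t\mapsto D^2\log u-tD^2\log\Phi$, one singular value of $Dm$ lies in $[0,1]$. The remaining $d-1$ singular values are bounded by
$$\lambda^{-1}\frac{L^2(1-r)^{-4}}{L(1-r)^{-2}}=\lambda^{-1}L(1-r)^{-2},$$
in the worst (tangential) direction. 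The key point of the double exponential is that $(1-r)^{-1}\lesssim\log L$, so each of these is $\lesssim\lambda^{-1}L(\log L)^2$. Therefore
$$|\det Dm(y)|\lesssim \lambda(y)^{1-d}\,\big(L(\log L)^2\big)^{d-1}\lesssim_q \lambda(y)^{1-d}(\log u(y))^{q}$$
for any $q>d-1$, using $L\lesssim\log u$ on $\mathcal T$.

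\textbf{Conclusion.} By the area formula and H\"older's inequality with exponents $p/(d-1)$ and $\kappa$,
$$|B_\delta|\le\int_{\mathcal T}|\det Dm|\lesssim \int_{\{u>1\}\cap B_2}\lambda^{1-d}(\log u)^q\le \|\lambda^{-1}\|_{L^p(B_2)}^{d-1}\Big(\int_{\{u>1\}\cap B_2}(\log u)^{q\kappa}\Big)^{1/\kappa},$$
and $\|\lambda^{-1}\|_{L^p(B_2)}^{d-1}\simeq\Gamma(B_2)^{d-1}$, which is the claim.

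\textbf{Main obstacle.} I expect the hard step to be the precise eigenvalue bookkeeping for $Dm$. The matrix is not symmetric, so the comparison of $D^2\log u$ with $D^2\log\Phi$ has to be carried out at the level of singular values. One must also check that the large $|\nabla\log\Phi|^2$ term coming from $D^2u/u$ costs only the factor $L(\log L)^2$, rather than a power of $(1-r)^{-1}$ that would be uncontrolled. This is exactly where the double-exponential growth is used and where the restriction $q>d-1$ enters. I would first verify this ratio separately in the radial and tangential directions, and only then run the Lipschitz and area-formula justification as in Lemma \ref{lem: doubling}.
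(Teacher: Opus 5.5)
\textbf{The gap is the step ``one singular value of $Dm$ lies in $[0,1]$.''} You touch multiplicatively, $u\le h\Phi(\cdot-x)$. This is the same as touching $w=\log u$ additively by $\log\Phi$. So your intermediate-value argument for $t\mapsto D^2\log u-tD^2\log\Phi$ needs $D^2\log u(y)$ to have a nonnegative eigenvalue at $t=0$. The subsolution inequality gives that only for $D^2u(y)$. What $w$ satisfies is $\mathcal{P}^+_{\lambda,1}(D^2w+\nabla w\otimes\nabla w)\ge 0$, and on the contact set $|\nabla w|^2=|\nabla\log\Phi|^2\eqqcolon G^2$. Write $a_r=\partial_{rr}\log\Phi$ and $a_t$ for the tangential eigenvalue. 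Then $G^2/a_r\to L$ as $|y-x|\to 1$, so $D^2\log u=D^2u/u-\nabla w\otimes\nabla w$ can be negative definite.

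Here is a concrete instance. In the radial/tangential frame take $D^2u(y)/u(y)=\mathrm{diag}(\mu,-\nu,\dots,-\nu)$ with $\mu=\frac{d-1}{d}(a_r+G^2)$ and $\nu=\mu/((d-1)\lambda(y))$. This respects both the touching condition $D^2u/u\le D^2\Phi/\Phi$ and $\mathcal{P}^+_{\lambda,1}(D^2u)\ge 0$. Yet the radial eigenvalue of $Dm=I-(D^2\log\Phi)^{-1}D^2\log u$ equals $\frac1d(1+G^2/a_r)\approx L/d$, which is not in $[0,1]$.

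In general, set $B\coloneqq D^2\Phi/\Phi=D^2\log\Phi+\nabla\log\Phi\otimes\nabla\log\Phi$. Then
\[
\det Dm=\frac{\det\bigl(B-D^2u/u\bigr)}{\det D^2\log\Phi}=\Bigl(1+\frac{G^2}{a_r}\Bigr)\det\bigl(I-B^{-1/2}(D^2u/u)\,B^{-1/2}\bigr).
\]
The second factor is bounded by $\lambda^{1-d}(L(1-r)^{-2})^{d-1}$, as you want. The prefactor $1+G^2/a_r\approx 1+L$, however, is a genuine extra factor of $\log u$, and the example above attains both factors. So your vertex map only yields $|\det Dm|\lesssim\lambda^{1-d}L^{d}(\log L)^{2(d-1)}$, and hence the lemma only for $q>d$, not for $q>d-1$. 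Your ``main obstacle'' paragraph was right to worry about the $|\nabla\log\Phi|^2$ term. It costs a full factor of $L$ in the determinant, not just in the operator norm.

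\textbf{Fix.} Touch additively, as the paper does. Choose a constant $c$ with $c\Phi\le 2$ on $B_{1/4}$ and $c\Phi(0)>1$, set $U=c\Phi$, and slide $U(\cdot-x)+h(x)$ onto $u$ from above. Then $h\ge 0$ and $u(y)\ge U(y-x)\ge c\Phi(0)>1$ on the contact set. This gives $L\lesssim\log u$ with a margin; your bound $u(y)\ge 1$ alone does not, although shrinking $\delta$ would repair that.

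With additive touching, $Dm=I-(D^2U)^{-1}D^2u$, and the intermediate-value argument runs on $D^2u-tD^2U$, where the subsolution inequality does supply a nonnegative eigenvalue at $t=0$. Hence $\sigma_{\min}(Dm)\le 1$ and
\[
|\det Dm|\le|Dm|^{d-1}\lesssim\Bigl(\lambda^{-1}\tfrac{\sigma_{\max}(D^2U)}{\sigma_{\min}(D^2U)}\Bigr)^{d-1}.
\]
The ratio here is $(a_r+G^2)/a_t\approx L(1-r)^{-2}\lesssim L(\log L)^2$. The gradient-squared term now enters only once, through $\sigma_{\max}$, with no extra Jacobian prefactor. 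The rest of your argument then goes through as written: the area formula on $\{u\le N\}$, monotone convergence, and H\"older with exponents $p/(d-1)$ and $\kappa$.
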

\begin{proof}
As per usual, we may replace $u$ by its supremal convolutions to assume without loss of generality that $u$ is semi-convex and thus twice differentiable except on a set $E$ of measure zero. Once the estimate is proven in this case, the general case will follow after noticing that the sup-convolutions $u^\eps$ given by 
$$u^\eps(x)\coloneqq \max_{y \in B_2} u(y) -\frac{1}{2\eps}|x-y|^2$$
will be locally uniformly bounded and converging pointwise to $u$, which is enough for a simple dominated convergence argument. 

We touch $u$ from above by translations of radial, convex barrier that blows up on $\partial B_1$. For reasons that are postponed until after the proof, we use as our barrier the function $U(x) \coloneqq c\exp(\exp((1-|x|^2)^{-1}))$, where $c = 2\exp(-\exp(16/15))$. For each $x \in B_{1/4}$, consider the sup-convolution
$$h(x) \coloneqq  \max_{z\in B_1} u(z) - U(x-z).$$
Plugging in $z =0$, we find that $h(x) \ge u(0) - U(x) =2 - c\exp(\exp(16/15)) \ge 0$
for all $x \in B_{1/4}$. Clearly the contact points $y$ belong to the interior of $B_{5/4}$, owing to the fact that $U$ tends to $+\infty$ on $\partial B_1$. Finally, from $h\ge 0$ we deduce that $u(y) \ge U(x-y)$ at any contact point $y$. In particular, $u  >1$ on the contact set. 
As in Lemmas \ref{lem: weakharnack1} and \ref{lem: doubling}, we define $\mathcal{T}$ to be the set of contact points $y$ corresponding to every center $x \in B_{1/4}$. For every $y \in \mathcal{T}\setminus E$, we have
\begin{equation}
    \label{eq: finalFOC1}
\nabla u(y) = \nabla U(y-x)
\end{equation}
and
\begin{equation}
    \label{eq: finalFOC2}
    D^2u(y) \le D^2U(y-x).
\end{equation}

The relation \eqref{eq: finalFOC1} and the strict convexity of $U$ imply that for each contact point $y \in \mathcal{T}$, there exists a unique corresponding center $x \in B_{1/4}$. Calling $m(y)=x$ we deduce that for every $y \in \mathcal{T}\setminus E$,
$$D^2u(y) = D^2U(y-m(y))(I-Dm(y))$$
and hence, suppressing the arguments,
$$Dm(y) = I-(D^2U)^{-1}D^2u.$$

In the same way as in the proofs of Lemmas \ref{lem: weakharnack1} and \ref{lem: doubling}, we use the fact that $u$ is a subsolution to deduce that $D^2u(y)$ has at least one non-negative eigenvalue. Then, again as in the proof of Lemma \ref{lem: doubling}, we may argue with \eqref{eq: finalFOC2} and the derived expression for $Dm(y)$ to deduce that $Dm(y)$ has at least one eigenvalue belonging to the interval $[0,1]$.

Using that
$$0 \le \mathcal{P}^+_{\lambda(\cdot),1}(D^2u)\le \tr((D^2u)^+) - \lambda \tr((D^2u)^-)$$
in conjunction with \eqref{eq: finalFOC2} we obtain
$$|D^2u(y)| \le \frac{2}{\lambda(y)}|D^2U(y-m(y))| \text{ for all } y \in \mathcal{T} \setminus E$$
Consequently, we derive the estimate for $y \in \mathcal{T}\setminus E$
\begin{equation}
    \label{eq: finalDmest}
    |Dm(y)| \lesssim \frac{1}{\lambda(y)}\frac{\sigma_{\text{max}}(D^2U)}{\sigma_{\text{min}}(D^2U)}
\end{equation}
where $\sigma_{\text{max}}$ and $\sigma_{\text{min}}$ denote the largest and smallest eigenvalues of $D^2U(y-m(y))$, respectively.

Setting $\vphi(x) = \vphi(|x|) \coloneqq \exp\exp((1-r^2)^{-1})$, it suffices estimate the ratio of the maximal and minimal eigenvalues of $D^2\vphi$. One finds that $D^2\vphi$ has one eigenvalue in the radial direction given by 
$$\vphi''(r) = \vphi(r)\left(\frac{2}{(1-r)^3}\log \vphi(r) + \frac{1}{(1-r)^4} \log \vphi(r) + \frac{1}{(1-r)^4} (\log \vphi(r))^2\right)$$
and $d-1$ tangential eigenvalues given by 
$$\frac{\vphi'(r)}{r} = \frac{\vphi(r)}{r(1-r)^2} \log\vphi(r).$$
Here we have denoted $\exp((1-r)^{-1})$ by $\log \vphi(r)$ to simplify the notation, but it is worth mentioning that $(1-r)^{-1} = \log\log\vphi(r)$. Carrying on, as $r \to 1$, it is clear that the ratio in question is asymptotically equivalent to 
$$\frac{1}{(1-r)^2} \log \vphi(r) = (\log \log \vphi(r))^2 \log \vphi(r).$$

Consequently, \eqref{eq: finalDmest} turns into the estimate
$$|Dm(y)| \lesssim \frac{1}{\lambda(y)} (\log\log U(y-m(y))^2 \log U(y-m(y)) \lesssim_\delta\frac{1}{\lambda(y)} (\log u(y))^{1+\delta}$$
for any $\delta>0$ and every $y \in \mathcal{T}\setminus E$. Here we have used that $u(y) \ge U(y-m(y))$. Arguing then as in Lemma \ref{lem: doubling}, we deduce the estimate $$|\det Dm(y)| \lesssim_\delta \lambda(y)^{1-d}(\log u(y))^{(d-1)(1+\delta)} \text{ for all } y \in \mathcal{T}\setminus E.$$ 

At this point, we would like to argue that $m$ is locally Lipschitz on $\mathcal{T}$, so that we may apply the change of variables formula. In contrast with the situation of Lemma \ref{lem: doubling}, where the contact points took place far from the singularity of the barrier, here we may genuinely have contacts taking place close to $\partial B_1(x)$ for any $x \in B_{1/4}$. To get around this, we observe that $m$ is locally Lipschitz on the sets $\mathcal{T}_N \coloneqq \mathcal{T} \cap \{u\le N\}$, and send $N \to \infty$. To sketch the details, we observe that $U(y-m(y))\le u(y) \le N$ on $\mathcal{T}_N$. Owing to the structure of $U$, certainly $D^2U(\cdot - m(\cdot))$ is bounded on $\mathcal{T}_N$. Then \eqref{eq: finalFOC2} combined with the semi-convexity of $u$ imply that $\nabla u$ is $C^{1,1}$ on $\mathcal{T}_N$. Note that we do not claim any kind of uniformity of these regularity estimates. The estimate for $Dm$ on $\mathcal{T}_N$ is then immediate. Then, using the monotone convergence theorem,
\begin{align}  
|B_{1/4}| &= \lim_{N \to \infty}|m(\mathcal{T}_N)| = \lim_{N \to \infty} \int_{\mathcal{T}_N} |\det(Dm)|\\
&\lesssim  \int_{B_{5/4} \cap \{u>1\}} \lambda^{1-d}(\log u)^{(d-1)(1+\delta)},
\end{align}
from which the claim will follow by H{\"o}lder's inequality. 
\end{proof}
\begin{remark}
    It is natural to ask whether the above estimate can be approved. The answer is essentially negative, at least not by touching with a different barrier. That is, for any convex, radially increasing barrier $U = U(|x|)$ tending to $+\infty$ as $|x| \to 1$, the ratio between the radial and tangential eigenvalues of $D^2U(|x|)$ must be growing at least logarithmically with respect to $U(|x|)$. We can see this as follows. Setting $U(|x|) \coloneqq \vphi(r)$, the relevant ratio (as in \eqref{eq: finalDmest}) is comparable as $r \to 1$ to 
    $$K(\vphi) \coloneqq \frac{\vphi''}{\vphi'}.$$

    Suppose that, as $r \to 1$, there were to hold $K(\vphi) \lesssim (\log \vphi)^\alpha$ for some $\alpha \le 1$. We will see that this is incompatible with $\vphi(r) \to + \infty$ as $r\to 1$. That $\vphi$ is increasing in $r$ allows us to consider to define the variable $s \coloneqq  \vphi(r)$. Setting also $f(r) \coloneqq  \vphi'(r)$, one finds that $f'(s) = \ds \frac{df}{ds} = \frac{df}{dr} \frac{dr}{ds} = \frac{\vphi''(r)}{\vphi'(r)} \lesssim (\log s)^\alpha.$

    Integrating, one obtains that $f(s) \lesssim s(\log  s)^\alpha$ as $s \to \infty$, which is valid under the assumption that $\vphi(r) \to \infty$ as $r\to 1$. In other words, we have derived that $\vphi'(r) \lesssim \vphi(r) (\log \vphi(r))^\alpha$ as $r\to 1$. Separating variables, and integrating from $r_0<1$ to $r$ near $1$, we derive
    $$\int_{\vphi(r_0)}^{\vphi(r)} \frac{1}{\vphi (\log \vphi)^{\alpha}} \ d\vphi \lesssim \int_{r_0}^r \ d\rho \lesssim 1.$$

    If $\alpha \le 1$, we reach a contradiction by sending $r \to 1$ and thus $\vphi(r) \to \infty$. 
\end{remark}
Now we turn to proving Proposition \ref{prop: LMP2}.
\begin{proof}[Proof of Proposition \ref{prop: LMP2}.]
Consider a general nonnegative subsolution $u\ge 0$ as in the statement of the proposition. Define $M \coloneqq \ds \sup_{B_1} u$, and let $x_0 \in \partial B_1$ be a point attaining the supremum. We consider the subsolution $v: B_2 \to \R$
$$v(y) \coloneqq \frac{2u(x_0 + y)}{M},$$
which clearly satisfies the assumptions of Lemma \ref{lem: LMP1}. We let $q>d-1$ and $\kappa = \ds \frac{p}{p-(d-1)}$ be as in the previous lemma. Applying said lemma and estimating naively, one obtains
\begin{equation}
\label{eq: firstlogmomentintegral}
    1 \lesssim \Gamma(B_3)^{d-1} \left(\int_{\{v>1\} \cap B_2} (\log v)^{q\kappa}\right)^{1/\kappa} = \Gamma(B_3)^{d-1}\left(\int_{\{u>M/2\} \cap B_3} \left(\log \left(\frac{2u}{M}\right)\right)^{q\kappa}\right)^{1/\kappa}.
\end{equation}
This proves the first claim of Proposition \ref{prop: LMP2}. It is a homogeneous form of the local maximum principle. 

For the proof of the rest of the proposition, assume that $M>2$. Now, take any exponent $\tau>q\kappa$, and for ease of notation define the moment
$$I_\tau \coloneqq \int_{\{u>1\} \cap B_3} (\log u)^\tau.$$
Integrating layer-by-layer, we find that
\begin{align}
\Gamma(B_3)^{\kappa(1-d)} &\lesssim \int_{\{u>M/2\} \cap B_3} \left(\log\left(\frac{2u}{M}\right)\right)^{q\kappa}\\
&= \int_0^\infty q\kappa t^{q\kappa -1} \left|x \in B_3 : u(x) > \frac{M}{2}e^t\right| \ dt \lesssim  I_\tau \int_0^\infty \frac{t^{q\kappa -1}}{\left(\log(M/2) + t\right)^\tau} \ dt\\
&= c(q,\kappa, \tau) (\log(M/2))^{q\kappa - \tau} I_\tau,
\end{align}
where we have used Chebyshev in the second line and calculated the last integral plainly. The final claim of Proposition \ref{prop: LMP2} follows by rearranging.

\end{proof}
\section{Harnack Inequality}
\label{sec: Harnack}
We now are in the position to prove a Harnack inequality. It will follow by combining the $\log-L^\eps$ estimate of Proposition \ref{prop: logL^eps} with the local maximum principle estimate of Proposition \ref{prop: LMP2}. This requires us to take $p$ large, so that $\eps$ is large and crosses the threshold dictated above. 
\begin{proof}[Proof of Theorem \ref{thm: Harnack}]
Before beginning, we fix an exponent $\eps>0$ as in the $\log-L^\eps$ estimate, satisfying 
\begin{equation}
    \label{eq: epscondition}
    0<\eps < \frac{p-(d-1)}{d},
\end{equation}
as well as an exponent $q>d-1$ as in the local maximum principle. We recall the notation $\ds \kappa =\ds \frac{p}{p-(d-1)}.$

Now, let $u$ be as in the statement of the theorem, normalized such that $\ds \inf_{B_3} u= 1$. The Weak Harnack inequality of Proposition \ref{prop: logL^eps} then implies that there holds 
$$\int_{B_3} (\log u)^\eps \lesssim \Gamma(B_5)^{\kappa d \eps}.$$
We wish to apply Proposition \ref{prop: LMP2}. If $\sup_{B_1} u \le 2$, we are done. So we may as well assume that $\sup_{B_1}u >2$. In this case, we apply the latter local maximum principle estimate of Proposition \ref{prop: LMP2} with $\tau = \eps$, which is legitimate provided that $\eps>q\kappa$, to deduce that
$$\sup_{B_1} u \le c_1 \exp\left(c_2 \Gamma(B_5)^{\frac{\kappa(d-1)}{\eps - q\kappa}+ \kappa \eps d}\right).$$

Requiring that $q>d-1$ and that $\eps$ satisfy \eqref{eq: epscondition} leads to a restriction on the set of admissible $p$ such that $\eps > q \kappa$, namely, that
    $$(p-(d-1))^2 > pd(d-1).$$
    One can check via the quadratic formula that this is precisely the condition 
    $$p > \frac{d-1}{2}\left((d+2) + \sqrt{d(d+4)}\right),$$
as in the statement of the theorem. This concludes the proof of the Harnack inequality.
\end{proof}
\section{Other Notions of Solution}
\label{sec: othernotions}
The standing assumption of this paper was that the ellipticity $\lambda(\cdot)$ is continuous, strictly positive on the interior, and satisfies an integrability condition of the form $\lambda^{-1} \in L^p(B_5).$
Under this assumption, the results were proven for so-called $C$-viscosity solutions. We now discuss the possibility of removing the continuity and strict positivity assumption on $\lambda(\cdot)$, for two different notions of solution. As we saw in the introduction, this is not possible for $C$-viscosity solutions. For the remainder of this section, we consider ellipticities $\lambda(\cdot)$ satisfying only the integrability condition
$$\frac1\lambda \in L^p(B_5).$$

The simplest model degenerate elliptic equation with ellipticity that may actually vanish at a point is given by 
\begin{equation}
    \label{eq: toymodel}
    |x|^\alpha u''(x) =0 \text{ on } (-1,1),
\end{equation}
as written in the introduction. There, we saw that $u(x) = |x|$ is a classical $C-$viscosity solution which will violate any possible Harnack inequality for all $\alpha>0$. If we wish to develop a Harnack inequality for potentially vanishing ellipticities, we need our class of solutions to exclude this counterexample. 

The first new notion of solution we explore is called a \textit{$L^s$-strong solution}. We say that $u \in W^{2,s}(B_5)$ is an $L^s$-strong supersolution to
$$\mathcal{P}^-_{\lambda(\cdot),1}(D^2u) \le 0 \text{ in } B_5$$
if it satisfies
$$\mathcal{P}^-_{\lambda(x),1}(D^2u(x))= \lambda(x) \tr((D^2u)^+(x)) - \tr((D^2u^-)(x))\le 0 \text{ a.e.  in }B_5,$$
with an analogous definition for subsolutions. This is a \textit{stronger} notion of solution, in that $L^s$-strong solutions are necessarily $C-$viscosity solutions, and come equipped with more regularity, provided $s>d/2$. The interested reader should consult \cite{GilbargTrudinger2001} for more on strong solutions.

We claim that all of our results hold for $L^s$-strong solutions, if $s>d$. Notice that the function $u(x) =|x|$ does not belong to $W^{2,s}((-1,1))$ for any $s\ge 1$, as its second derivative carries a Dirac mass at the origin. 

\begin{thm}
    Fix $s>d$. Then all of the results of this paper remain true for $L^s$-strong solutions $u$, without requiring that the ellipticity $\lambda(\cdot)$ fulfill the assumption \eqref{eq: structuralassumption}. In particular, our results hold for ellipticities which may vanish on sets of measure zero. 
\end{thm}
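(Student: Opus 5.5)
The plan is to re-examine where continuity and strict positivity of $\lambda(\cdot)$, and the viscosity framework, actually enter the proofs. They enter in exactly three places: the inf/sup-convolution regularization, the pointwise contact conditions such as \eqref{eq: FOC1}--\eqref{eq: FOC2}, and the area-formula step. For an $L^s$-strong solution with $s>d$, Morrey's embedding gives $u\in C^{1,1-d/s}$. Moreover, $u$ is twice differentiable almost everywhere in the Alexandroff sense, i.e.\ it has a second-order Taylor expansion a.e.; this follows from Calder\'on--Zygmund theory, since $W^{2,s}$ with $s>d/2$ suffices. Hence the pointwise inequality $\mathcal{P}^-_{\lambda(x),1}(D^2u(x))\le 0$ holds on a set of full measure. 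This is the only place the equation is used. So the strategy is to run every touching argument (Lemmas \ref{lem: weakharnack1}, \ref{lem: doubling}, \ref{lem: LMP1}) directly on $u$, with no regularization, and to discard a null set where either the equation or twice differentiability fails.

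The key step is a replacement for the regularity of $m$ on the contact set. At a contact point $y$ where $u$ is twice differentiable and the equation holds, the first- and second-order conditions \eqref{eq: FOC3}--\eqref{eq: FOC4} (and their analogues) hold exactly as before. The eigenvalue argument then gives $|\det Dm(y)|\lesssim \lambda(y)^{1-d}$, or $\lambda(y)^{1-d}(\log u(y))^{(d-1)(1+\delta)}$ in Lemma \ref{lem: LMP1}. This is a pointwise algebraic computation and needs nothing about $\lambda$ beyond its value at $y$, which is finite and positive a.e. because $\lambda^{-1}\in L^p$. For the change of variables, note that $m(y)=y-(\nabla\psi)^{-1}(\nabla u(y))$ (resp.\ $y-\nabla U^{-1}\circ\nabla u$) is a composition of $\nabla u\in W^{1,s}$, $s>d$, with a smooth map on the relevant range (bounded away from the cusp vertex, or restricted to $\{u\le N\}$ as in Lemma \ref{lem: LMP1}). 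A $W^{1,s}$ map with $s>d$ satisfies Lusin's condition (N) and the area formula holds for it (Marcus--Mizel). Consequently
$$|m(\mathcal T)|\le\int_{\mathcal T}|\det Dm|,$$
where the integrand is evaluated at a.e.\ point of $\mathcal T$; condition (N) guarantees that the null set $E$ of bad points has null image. This is exactly where $s>d$ rather than $s>d/2$ is needed.

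With these two ingredients, the proofs of Lemmas \ref{lem: weakharnack1}, \ref{lem: doubling}, \ref{lem: LMP1} go through verbatim, using H\"older and Chebyshev with $\lambda^{-1}\in L^p$. Corollary \ref{cor: sameball}, Proposition \ref{prop: logL^eps}, Proposition \ref{prop: IOL}, Theorem \ref{thm: Liouville}, Proposition \ref{prop: LMP2} and Theorem \ref{thm: Harnack} use only these lemmas together with measure-theoretic covering arguments, scaling, and the linear invariance $u\mapsto au+b$. The class of $L^s$-strong solutions is stable under these operations and under restriction to subballs.

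One remaining point is the strong maximum principle in the proof of Proposition \ref{prop: IOL}: the claim that $\inf_{B_1}v>0$ unless $v$ is constant. I would avoid it by applying the $\log$-$L^\eps$ estimate to $v+\eta$ and letting $\eta\to0$; the resulting bound is uniform in $\eta$ and gives $\inf_{B_1} v\ge\theta$ directly. The step I expect to be the main obstacle is justifying the area inequality together with the contact conditions at a.e.\ contact point for a merely $W^{2,s}$ function. Since $\nabla u$ is only H\"older continuous, $m$ need not be Lipschitz on $\mathcal T$. I would rely on condition (N) for Sobolev maps above the dimension and on the a.e.\ approximate differentiability of $\nabla u$, checking that the approximate derivative coincides with the Hessian used in the contact inequalities.
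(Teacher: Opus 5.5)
Your proposal is correct and follows the paper's route. You drop the inf/sup-convolution step, run the touching arguments of Lemmas \ref{lem: weakharnack1}, \ref{lem: doubling} and \ref{lem: LMP1} directly on $u$ outside a null set, and justify the change of variables by noting that the contact-to-vertex maps are $W^{1,s}$ with $s>d$. You also supply details the paper leaves implicit: Lusin's condition (N) and the area formula via Marcus--Mizel, the identification of the a.e. derivative of $m$ with the Hessian in the contact inequalities, and the $v+\eta$ approximation in place of the strong maximum principle in Proposition \ref{prop: IOL}.
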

\begin{proof}
    The proofs are nearly identical. Instead of reducing to semi-concave and semi-convex approximations, one may perform identical calculations directly for the solution $u$, since the set where $\lambda(\cdot)$ is positive and $u$ is second-differentiable is full measure within $B_5$, and its Lebesgue-null complement will not affect the measure estimates used to obtain the weak Harnack inequality or local maximum principle. The condition $s>d$ arises when one applies the change of variables formula. It is not hard to see that in Lemmas \ref{lem: weakharnack1}, \ref{lem: doubling}, and \ref{lem: LMP1}, the change of variables all are $W^{1,s}$ transformations. 
\end{proof}

There is yet another notion of viscosity solution that we here explore, namely the notion of \textit{$L^s$-viscosity solution} introduced by Caffarelli, Crandall, Kocan, and {\'S}wiech in the paper \cite{CaffarelliCrandallKocanSwiech1996}, which we recall now. We say that a continuous function $u$ is an $L^s$-viscosity supersolution to 
$$\mathcal{P}_{\lambda(\cdot),1}^-(D^2u) \le 0 \text{ in } B_5$$
if for all $\vphi \in W^{2,s}_{\text{loc}}(B_5)$ and for all $x \in B_5$ such that $u-\vphi$ has a local minimum at $x$, there holds 
\begin{equation}
\label{eq: L^sviscositysuper}
\text{ess}-\liminf_{y\to x} \mathcal{P}_{\lambda(y),1}^-(D^2\vphi(y)) \le 0
\end{equation}
Likewise, we say that $u$ is an $L^s$-viscosity subsolution to 
$$\mathcal{P}_{\lambda(\cdot),1}^+(D^2u) \ge 0 \text{ in } B_5$$
if for all $\vphi \in W^{2,s}_{\text{loc}}(B_5)$ and for all $x \in B_5$ such that $u-\vphi$ has a local maximum at $x$, there holds 
\begin{equation}
    \label{eq: L^sviscositysub}
    \text{ess}-\limsup_{y\to x} \mathcal{P}_{\lambda(y),1}^+(D^2\vphi(y)) \ge 0.
\end{equation}
Of course, one needs $s>d/2$ to make sense of touching by a $W^{2,s}$ function. This is a weaker notion of solution than the aforementioned $W^{2,s}$-strong solution, but it is a stronger notion of solution than the classical $C$-viscosity solution. Now we briefly explore whether the results of this paper hold for $L^s$-viscosity solutions, without continuity or strict positivity assumptions on $\lambda(\cdot)$. 
\\\par Consider again the toy model \eqref{eq: toymodel} and the function $u(x)=|x|$. Since $W^{2,s}((-1,1))$ functions are locally $C^1$, there are no admissible test functions $\vphi$ touching $u$ from above at the origin, and thus the subsolution condition is vacuously satisfied. We claim now that if $\alpha s \ge 1$, then $u$ is also an $L^s$-viscosity supersolution, and thus a full $L^s$-viscosity solution. To that end, suppose that $\vphi$ touches $u$ from below at the origin, but that for some $\delta>0$,
$$|x|^\alpha \vphi''(x) >\delta$$
for almost every $x$ in some neighborhood of the origin $(-r_0, r_0)$. This plainly precludes $\vphi''$ from belonging to $L^s((-1,1))$, as $|\vphi''(x)|^s \gtrsim |x|^{-\alpha s}$ near the origin, and the latter is not integrable in the regime $\alpha s \ge 1.$ We conclude that $u(x)=|x|$ must be an $L^s$-viscosity solution, and therefore the assumption that $\lambda(\cdot)$ is a positive continuous function may not be dropped for this $L^s$-viscosity solutions if $s$ is too large. 

There is a moral reason for why Harnack inequality and our techniques may fail for $C$-viscosity solutions or $L^s$-viscosity solutions if the ellipticity is allowed to vanish at points, which is that infimal convolutions of supersolutions may fail to satisfy the appropriate supersolution condition. A simple calculation for $u(x) =|x|$ shows that
$$u_\eps(x) \coloneqq \inf_{y \in \R} |y| + \frac{1}{2\eps}|x-y|^2 = \begin{cases} \frac{|x|^2}{2\eps} & \text{ for } |x| \le \eps \\ |x| - \frac\eps2 & \text{ for } |x|>\eps.\end{cases},$$
which evidently does not satisfy the $L^s-$viscosity supersolution condition \eqref{eq: L^sviscositysuper}. This is due to the fact that all of the parabolas centered at $|x|<\eps$ touch at the origin, where the coefficient vanishes. 
\\\par On the other hand, something interesting occurs in the case $\alpha s < 1$, where the class of admissible $W^{2,s}$ touching functions becomes rich enough to impose further structural conditions on the solution $u$. It is clear that a solution $u$ to \eqref{eq: toymodel} must be of the form
$$u(x) = \begin{cases}  u_L x & \text{ for } x\le 0 \\  u_R x & \text{ for } x>0\end{cases}$$
after subtracting off a constant. If $\alpha s<1$, then cusps of the form $x \mapsto |x|^{2-\alpha}$ become admissible test functions. We claim that the supersolution condition \eqref{eq: L^sviscositysuper} implies that $u_L \ge u_R$, and the subsolution condition \eqref{eq: L^sviscositysub} implies that $u_R \ge u_L$. Together, they will imply that $u$ is linear. We'll just analyze the former case, as the latter follows by symmetrical arguments. To that end, suppose for the sake of contradiction that $u_L <u_R$, and select $b \in (u_L, u_R)$. We use as a test function $\vphi(x)\coloneqq bx + c|x|^{2-\alpha}$ for $c$ small, which touches $u$ from below at the origin. But

$$\liminf_{x \to 0} |x|^\alpha \vphi''(x) = c(2-\alpha)(1-\alpha) >0,$$
contradicting that $u$ satisfies \eqref{eq: L^sviscositysuper}. Applying a similar argument from above, we conclude that $u_R=u_L$. 

The above calculation shows that if the results of this paper are to be extended to $L^s$-viscosity solutions to degenerate equations with possibly vanishing ellipticities $\lambda(\cdot)$, the exponent $s$ must be small enough relative to the integrability of $\lambda^{-1}$. It also suggests that one must exploit the admissible class of ``singular" test functions in a way to deduce further structural conditions on the solution $u$, but it is not clear at all what this would look like in general. Notice that all of our arguments in this paper involve testing with smooth functions, or at least functions which are smooth in neighborhoods of contact. It is also unclear whether one may use inf-convolution approximation arguments for $L^s$-viscosity solutions in any case. We leave this problem for future work.
\bibliographystyle{abbrv}
\bibliography{cite}

\end{document}